\newcommand{\cS}{{\mathcal S}}
\newcommand{\cO}{{\mathcal O}}
\newcommand{\cF}{{\mathcal F}}
\newcommand{\cA}{{\mathcal A}}
\newcommand{\cD}{{\mathcal D}}
\newcommand{\mbC}{{\mathbb C}}
\newcommand{\cP}{{\mathcal P}}
\newcommand{\cC}{{\mathcal C}}
\newcommand{\cL}{{\mathcal L}}
\newcommand{\mbZ}{{\mathbb Z}}
\newcommand{\lam}{\lambda}
\newcommand{\G}{\Gamma}
\newcommand{\fH}{\mathfrak H}
\newtheorem{thm}{Theorem}[section]
\newtheorem{prop}{Proposition}[section]
\newtheorem{cor}{Corollary}[section]
\newtheorem{define}{Definition}[section]
\def\adots{\mathinner{\mkern2mu\raise0pt\hbox{.}  % antidiagonal dots
\mkern2mu\raise4pt\hbox{.}\mkern1mu
\raise7pt\vbox{\kern7pt\hbox{.}}\mkern1mu}}
\numberwithin{equation}{section}
\begin{document}
\setlength{\parindent}{0pt}
\setlength{\parskip}{1ex plus 0.5ex minus 0.2ex}

\bibliographystyle{ieeetr}

\title[On CRDAHA and finite general linear and unitary groups]
{On CRDAHA and  finite general linear and unitary groups}

\author{Bhama Srinivasan}
  \address{Department of Mathematics, Statistics, and Computer Science (MC 249)\\
           University of Illinois at Chicago\\
           851 South Morgan Street\\
           Chicago, IL  60607-7045}
  \email{srinivas@uic.edu}
  
  \begin{abstract}
We show a connection between Lusztig induction operators 
in finite general linear and unitary groups and parabolic induction in
cyclotomic rational double affine Hecke algebras. Two applications are given:
an explanation of a bijection result of Brou\'e, Malle and Michel, and
some results on modular decomposition numbers of finite general groups. 
\\
\\
2010 {\it AMS Subject Classification:}  20C33
\end{abstract}
 
  \maketitle
 \centerline{ Dedicated to the memory of Robert Steinberg} 
 
 \section{Introduction}
 
 Let $\G_n$ be the complex reflection group $G(e, 1, n)$,
 the wreath product of $S_n$ and $\mbZ/e\mbZ$, where $e > 1$
 is fixed for all $n$. Let $H(\G_n)$ be the
 cyclotomic rational double affine Hecke algebra, or CRDAHA,
 associated with the complex reflection group $\G_n$.
 The representation theory of the algebras $H(\G_n)$
 is  related to the representation theory  
 of the groups $\G_n$, and thus to the modular
 representation theory of finite general linear groups $GL(n,q)$
 and unitary groups $U(n,q)$.
 In this paper we study this connection in the context
 of a recent paper of Shan and Vasserot \cite{SV}.
 In particular we show a connection between Lusztig induction operators 
in  general linear and unitary groups and certain operators in a Heisenberg
algebra acting on a Fock space. We give 
two applications of this result, where $\ell$ is a prime not dividing $q$
and $e$ is the order of $q$ mod $\ell$.
 The first is a connection via Fock space
 between an induction functor
in CRDAHA described in \cite{SV} and Lusztig induction, which gives an
explanation for a bijection  given by Brou\'e, Malle and Michel \cite{BMM}
and Enguehard \cite{E} 
between characters in an $\ell$-block of a finite general linear, unitary
or classical group and characters of a corresponding complex reflection group.
The second is an application to the  $\ell$-modular theory of $GL(n,q)$,
describing some  Brauer characters by Lusztig induction, for large $\ell$.
 
 The paper is organized as follows. In Section \ref{RDAHA} we state
 the results on CRDAHA from \cite{SV} that we need. We introduce the category
 ${\cO}(\G) = \oplus_{n \geq 0}{\cO}(\G_n)$ where ${\cO}(\G_n)$ is 
 the category $\cO$ of $H(\G_n)$.

 In Section \ref{finite}
 we describe the $\ell$-block theory of $GL(n,q)$ and $U(n,q)$.
 The unipotent characters 
 in a unipotent block are precisely the constituents of a Lusztig 
 induced character from an $e$-split Levi subgroup. Complex reflection groups
 arise when considering the defect groups of the blocks.
 
 In Section \ref{Fock}
 we introduce the Fock space and the Heisenberg algebra, and describe the connection
 between parabolic induction in CRDAHA and a Heisenberg algebra action on a
 Fock space given in \cite{SV}. We have a Fock space ${\cF}_{m,\ell}^{(s)}$
 where $m, \ell > 1$ are positive integers and $(s)$ is an $\ell$-tuple of integers.
 In \cite{SV} a functor $a_{\mu}^*$, where $\mu$ is a partition, is introduced
 on the Grothendieck group   $[{\cO}(\G)]$ and is identified with an
 operator $S_{\mu}$  of  a Heisenberg algebra on the above Fock space. 
 
  The case $\ell=1$ is  considered in Section \ref{Fock2}.
 We consider a Fock space with a basis indexed by 
 unipotent representations of general linear or unitary groups. We define
 the action of a Heisenberg algebra on this by a Lusztig induction operator
 $\cL_{\mu}$ and prove that it can be identified with an  operator $S_{\mu}$  defined by
 Leclerc and Thibon \cite{LT1}.  This is one of the main results of
 the paper. It involves using a map introduced by 
 Farahat \cite{F} on the characters of symmetric groups, which appears to be not
 widely known.
 
 In Sections \ref{reflection} and \ref{decn}
 we give applications of this result, using the results of Section \ref{Fock}.
 The first application
 is that parabolic induction $a_{\mu}^*$ in CRDAHA and Lusztig induction 
$\cL_{\mu}$ on general linear or unitary groups can be regarded as operators
arising from equivalent representations of 
the  Heisenberg algebra. This gives an explanation for an observation of 
Brou\'e, Malle and Michel on a bijection between Lusztig induced characters
in a block of $GL(n,q)$ and $U(n,q)$ and characters of a complex reflection
group arising from the defect group of the block.

The second application deals with $\ell$-decomposition
numbers of the unipotent characters of $GL(n,q)$
 for large $\ell$. 
Via the $q$-Schur algebra we can
regard these numbers as arising from the coefficients of a canonical basis
 $G^{-}(\lambda)$ of Fock space, where $\lambda$ runs through all partitions,
in terms of the standard basis. The $G^{-}(\lambda)$  then express the Brauer characters of
$GL(n,q)$ in terms of unipotent characters. The $G^{-}(\lambda)$ are also described as 
$S_{\mu}$, and so we finally get that if 
$\lam = \mu + e\alpha$ where $\mu'$ is $e$-regular, the Brauer character 
parametrized by $\lam$ is in fact a Lusztig induced generalized character.

\section{Notation}

 $\cP, \cP_n, \cP^{\ell}, \cP_n^{\ell}$ denote the set of all partitions,
the set of all partitions of $n \geq 0$, the set of all $\ell$-tuples of partitions, 
and the set of all $\ell$-tuples of partitions of integers $n_1, n_2, \ldots n_{\ell}$
such that $\sum n_i = n$, respectively.

If $\mathcal C$ is an abelian category, we write $[\mathcal C]$ for the 
complexified Grothendieck group of $\mathcal C$.

We write $\lam \vdash n$ if $\lam$ is a partition of  $n \geq 0$. The parts
of $\lambda$ are denoted by $\lbrace \lam_1, \lam_2, \ldots \rbrace$.
If $\lam= \lbrace \lam_i \rbrace$, $\mu= \lbrace \mu_i \rbrace$
are partitions, $\lam + \mu = \lbrace \lam_i + \mu_i \rbrace$
and $e\lam = \lbrace e\lam_i \rbrace$ where $e$ is a positive integer.

\section{CRDAHA, complex reflection groups}\label{RDAHA} 

References for this section are  \cite{SV}, \cite{GL}. 
We use the notation of (\cite{SV}, (3.3), p.967).

Let $\G_n=\mu_{\ell} \wr {\cS}_n$, where $\mu_{\ell}$ is the 
group of $\ell$-th roots of unity in $\mbC$ and ${\cS}_n$ is the 
 symmetric group of degree $n$, so that $\Gamma_n$
is a complex reflection group. 
The representation category of $\G_n$ is denoted by ${\rm Rep}({\mathbb C}\G_n)$.
The irreducible modules in 
${\rm Rep}({\mbC}\G_n)$ are known by a classical construction and denoted by
$\bar{L}_{\lam}$ where $\lam \in {\mathcal P}_n^{\ell}$.
Let $R(\G)= \oplus_{n \geq 0}[{\rm Rep}({\mathbb C}\G_n)]$.

Let $\fH$ be the reflection representation
of $\Gamma_n$ and $\fH^*$ its dual. 
The cyclotomic rational double affine Hecke algebra or CRDAHA
 associated with $\G_n$ is denoted by $H(\G_n)$,
and is the quotient of the smash product of $\mbC\G_n$ and the tensor algebra of 
$\fH \oplus \fH^*$ by certain relations. The definition involves certain parameters
 (see \cite{SV}, p.967) which play a role in the results we quote from \cite{SV},
 although we will not state them explicitly.

The category $\cO$ of $H(\G_n)$ is denoted by ${\cO}(\G_n)$. This is the category 
of $H(\G_n)$-modules whose
objects are finitely generated as $\mbC [\fH]$-modules and are $\fH$-locally nilpotent.
Here $\mbC[\fH]$ is the subalgebra of $H(\G_n)$ generated by $ \fH^*$ .
 Then ${\cO}(\G_n)$ is a highest weight category (see e.g. \cite{RSVV})
 and its standard modules are denoted by $\Delta_{\lam}$ where $\lam \in {\mathcal P}_n^{\ell}$.
Let ${\cO}(\G) = \oplus_{n \geq 0}{\cO}(\G_n)$. This is one of the main objects 
of our study.

We then have a ${\mathbb C}$-linear isomorphism
${\rm spe}: [{\rm Rep}({\mathbb C}\G_n)] \rightarrow [{\cO}(\G_n)]$ given by $[\bar{L}_{\lam}] \rightarrow [\Delta_{\lam}]$.  We will from now on consider $[{\cO}(\G_n)]$
instead of $[{\rm Rep}({\mathbb C}\G_n)]$.

Let $r, m, n \geq 0$. 
For $n,r$ we have a parabolic subgroup $\G_{n,r} \cong \G_n
 \otimes {\cS}_r$ of $\G_{n+r}$ ,
and there is a canonical equivalence of categories 
$\cO(\G_{n,r}) = \cO(\G_n) \otimes \cO({\cS}_r)$.
By the work of Bezrukavnikov and Etingof  \cite{BE} there are induction and restriction 
functors $^{\cO} {\rm Ind}_{n,r}: \cO(\G_n) \otimes \cO(\cS_r)\rightarrow
\cO(\G_{n+r})$ and $^{\cO} {\rm Res}_{n,r}: \cO(\G_{n+r}) \rightarrow
\cO(\G_n) \otimes \cO(\cS_r)$.

If $\mu \vdash r$, Shan and Vasserot (\cite{SV}, 5.1) have defined functors
$A_{\mu,!},
A_{\mu}^*,  A_{\mu, *} $ on ${\cD}^b({\cO}({\G}))$. 

Here we will be concerned with $A_{\mu}^*$, defined as follows.
\begin{align} \label{functor1}
A_{\mu}^*: {\cD}^b({\cO}({\G_n})) \rightarrow {\cD}^b({\cO}({\G_{n+mr}})), \nonumber \\
M \rightarrow ^{\cO} {\rm Ind}_{n,mr}(M \otimes L_{m\mu})  
\end{align}

Then $a_{\mu}^*$ is defined as the restriction to $[{\cO}(\G)]$ of $A_{\mu}^*$.

\section{Finite general linear and unitary groups}\label{finite}

In this section we describe a connection between the block 
theory of $GL(n,q)$ or $U(n,q)$, and complex reflection groups. This was first observed by
Brou\'e, Malle  and Michel \cite{BMM} and Enguehard \cite{E} for arbitrary
finite reductive groups.

Let $G_n=GL(n,q)$ or $U(n,q)$. 
The unipotent characters of $G_n$ are indexed by 
partitions of $n$. Using the description in (\cite{BMM}, p.45) 
we denote the character corresponding to $\lam \vdash n$ of $GL(n,q)$ or
the character, up to sign, corresponding to $\lam \vdash n$ of $U(n,q)$
as in \cite{FS} by $\chi_{\lam}$. 

Let $\ell$ be a prime not dividing $q$ and $e$ the order of $q$ mod $\ell$.
The $\ell$-modular representations of $G_n$ have been studied by various authors
(see e.g. \cite{CE}) since they were introduced in \cite{FS}. The partition of
the unipotent characters of $G_n$ into $\ell$-blocks is described in the 
following theorem from \cite{FS}. This classification depends only on $e$,
so we can refer to an $\ell$-block as  an $e$-block, e.g. in Section \ref{reflection}.

\begin{thm} The unipotent characters $\chi_{\lam}$ and $\chi_{\mu}$ of $G_n$
are in the same $e$-block if and only if the partitions $\lam$ and $\mu$
of $n$ have the same $e$-core.
\end{thm}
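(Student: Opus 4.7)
The plan is to invoke Brauer's theorem on central characters: two irreducible characters of $G_n$ lie in the same $\ell$-block if and only if their central characters $\omega_\lam$ and $\omega_\mu$ agree modulo a prime ideal $\mathfrak p$ of the ring of algebraic integers lying above $\ell$. So the task reduces to computing the $\omega_{\lam}$ for unipotent $\chi_{\lam}$ and comparing them modulo $\mathfrak p$.

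First I would write down the hook-length formula for the generic degree of a unipotent character of $GL(n,q)$,
\[
\chi_\lam(1)=q^{n(\lam)}\frac{\prod_{i=1}^{n}(q^{i}-1)}{\prod_{(i,j)\in\lam}(q^{h(i,j)}-1)},
\]
and the corresponding formula for $U(n,q)$ obtained by Ennola duality $q\mapsto -q$. Since $\ell\nmid q$, one has $\ell\mid q^{h}-1$ iff $e\mid h$. Hence the $\ell$-part of $\chi_\lam(1)$, and more generally the $\mathfrak p$-adic valuation of the numerator/denominator of $\omega_\lam$, is governed entirely by the multiset of hook lengths of $\lam$ that are divisible by $e$.

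Next I would invoke the $e$-abacus bijection, which identifies partitions of $n$ with a given $e$-core $\kappa$ with $e$-quotients $(\lam^{(0)},\dots,\lam^{(e-1)})$ of total size $w=(n-|\kappa|)/e$. Under this bijection the multiset $\{h/e : e\mid h,\ h\ \text{a hook length of}\ \lam\}$ coincides with the multiset of all hook lengths of the $e$-quotient. This already shows that the $\ell$-part of $\chi_\lam(1)$ depends only on the $e$-core and the $e$-weight of $\lam$, giving one direction for the degree; with a little more work (using that $n(\lam)$ mod $\ell$ is also controlled once the $e$-core is fixed, up to the $e$-part of $q$) the full central character on the identity class matches across fixed $e$-cores.

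The main obstacle, and the combinatorial heart of the argument, is to upgrade this from the trivial class to enough classes of $G_n$ to separate blocks. For this I would evaluate $\omega_\lam$ on classes of regular semisimple elements sitting in Coxeter-type tori of $e$-split Levi subgroups: via the Murnaghan--Nakayama-type formula for unipotent character values these values are expressible in terms of symmetric-group character values on cycle types $(e,e,\dots,e)$, so adding or removing an $e$-hook changes $\omega_\lam(C)$ by a factor congruent to $1$ modulo $\mathfrak p$. Conversely, choosing a Coxeter class in an $e$-split torus whose block structure reflects the $e$-core shows that distinct $e$-cores are actually separated modulo $\mathfrak p$. This two-way comparison is precisely the computation carried out in \cite{FS} and completes the theorem.
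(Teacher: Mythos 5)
First, a point of reference: the paper does not prove this theorem at all --- it is quoted from Fong--Srinivasan \cite{FS} --- so the only meaningful comparison is with the original argument there. Your outline does follow the right general strategy: Brauer's central-character criterion ($\chi_\lambda,\chi_\mu$ in the same block iff $\omega_\lambda\equiv\omega_\mu\pmod{\mathfrak p}$ on all class sums) together with the hook/abacus combinatorics is the correct skeleton of a Nakayama-type argument, and your observation that the $\ell$-part of the degree is governed by the hooks divisible by $e$ (equivalently by the $e$-weight, via the $e$-quotient bijection) is correct.

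Nevertheless there is a genuine gap: the entire content of the theorem sits in the step you describe in one sentence and then explicitly outsource (``this two-way comparison is precisely the computation carried out in \cite{FS}''). Concretely: (i) for the direction ``same $e$-core $\Rightarrow$ same block'' you must verify the congruence $\omega_\lambda\equiv\omega_\mu\pmod{\mathfrak p}$ on \emph{all} classes, not only on regular semisimple ones; unipotent character values on general elements involve Green functions, and reducing to symmetric-group character values via $\chi_\lambda=|S_n|^{-1}\sum_w\lambda(w)R_{T_w}^{G_n}(1)$ still requires controlling $R_{T_w}^{G_n}(1)(x)$ and the class sizes modulo $\mathfrak p$ --- this is where \cite{FS} spend most of their effort, working with $\ell$-regular semisimple classes, their centralizers, and Brauer's second main theorem to handle the remaining elements. (ii) For the converse you assert that a well-chosen class in an $e$-split torus ``separates'' distinct $e$-cores, but you produce neither the class nor the computation; in \cite{FS} the separation is in fact obtained by identifying blocks with their defect groups and Brauer correspondents (Brauer's first main theorem) rather than by a bare congruence. (iii) For $U(n,q)$ the Ennola substitution $q\mapsto -q$ changes the relevant parameter: the order of $-q$ modulo $\ell$ is $2e$, $e/2$ or $e$ according as $e$ is odd, $e\equiv 2$ or $e\equiv 0\pmod 4$, so ``$e$-core'' must be read with this adjustment, which your sketch does not address. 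As written the proposal is a reasonable plan of attack, consistent with the strategy of \cite{FS}, but it is not a proof.
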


There are subgroups of $G_n$ called $e$-split Levi subgroups (\cite{CE}, p.190). In the case
of $G_n=GL(n,q)$ an $e$-split Levi subgroup $L$  is of the form a product of smaller general linear groups
over $F_{q^e}$ and $G_k$ with $k \leq n$. In the case of $G_n=U(n,q)$, 
$L$  is of the form a product of smaller general linear groups or of smaller unitary groups
over $F_{q^e}$ and $G_k$ with $k \leq n$. Then a pair $(L,{\chi}_{\lambda})$ 
is an $e$-cuspidal pair if $L$ is $e$-split of the form a product of copies of tori, all
of order $q^{e}-1$  in the case of $GL(n,q)$, or all of orders $q^e-1$, $q^{2e}-1$  or $q^{e/2}+1$ in
the case of $U(n,q)$ and $G_k$, where $G_k$ has an $e$-cuspidal unipotent character
${\chi}_{\lambda}$ (\cite{BMM}, p.18, p.27; \cite{E}, p.42). Here a character of 
$L$ is $e$-cuspidal if it is not
a constituent of a character obtained by Lusztig induction $R_M^L$
from a proper $e$-split Levi subgroup $M$ of $L$.

The unipotent blocks, i.e. blocks containing unipotent characters, are
classified by $e$-cuspidal pairs up to $G_n$-conjugacy. 
Let $B$ be a unipotent block corresponding to $(L,{\chi}_{\lambda})$.
Then if $\mu \vdash n$, ${\chi}_{\mu} \in B$ if and only if
 $<R_L^{G_n}({\chi}_{\lambda}),{\chi}_{\mu}> \neq 0$. As above, $R_L^{G_n}$
is Lusztig induction.
 
The defect group of a unipotent block is contained in $N_{G_n}(T)$ for a maximal torus $T$
of $G_n$ such that $N_{G_n}(T)/T$ is isomorphic to a complex reflection group 
$W_{G_n}(L, \lambda)= \mbZ_e \wr S_k$ for some $k \geq 1$. Thus the irreducible
characters of $W_{G_n}(L, \lambda)$ are parametrized by ${\mathcal P}_k^e$.

Let $B$ be a unipotent block of $G_n$ and $W_{G_n}(L, \lambda)$ as above.
We then have the following theorem due to Brou\'e, Malle and Michel (\cite{BMM}, 3.2)
and to Enguehard (\cite{E}, Theorem B).

\begin{thm} (Global to Local Bijection for $G_n$)\label{BMM}
 Let $M$ be an $e$-split Levi subgroup containing $L$  and let $W_M(L, \lambda)$ be
defined as above for $M$. Let $\mu$ be a partition, and let $I^M_L$ be the isometry 
mapping the character of $W_M(L, \lambda)$ parametrized by the 
$e$-quotient of $\mu$  to the  unipotent character $\chi_{\mu}$ 
of $M$ (up to sign) which is a constituent of $R_M^{G_n}(\lam)$.
Then we have 
$R_M^{G_n} I^M_L = I^{G_n}_L \ {\rm Ind}_{W_M(L, \lambda)}^{W_{G_n}(L, \lambda)}$.
 \end{thm}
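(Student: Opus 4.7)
My plan is to deduce the identity from transitivity of Lusztig induction, combined with the explicit combinatorial description of $e$-Harish-Chandra series in terms of $e$-cores and $e$-quotients.

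First, I would reinterpret both isometries $I^M_L$ and $I^{G_n}_L$ through the same combinatorial device. The classical $e$-core/$e$-quotient bijection identifies partitions $\mu$ with $e$-core $\lam$ with their $e$-quotients, i.e., with elements of ${\cP}_k^e$ where $k = (|\mu| - |\lam|)/e$; and this set is exactly the set of irreducible characters of $W_{G_n}(L, \lam) = \mbZ_e \wr {\cS}_k$. The same device applied to each factor of the $e$-split Levi subgroup $M$ (which is a product of twisted general linear or unitary groups together with $G_k$) gives $I^M_L$, in a manner compatible with the $M = G_n$ case.

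Second, using transitivity $R_L^{G_n} = R_M^{G_n} \circ R_L^M$ applied to $\chi_\lam$, I would reduce the claimed identity to the following intermediate statement: for any partition $\nu$ indexing a unipotent character $\chi_\nu$ of $M$ in the unipotent block of $(L, \chi_\lam)$, the Lusztig-induced virtual character $R_M^{G_n}(\chi_\nu)$ decomposes on the unipotent characters of $G_n$ (up to signs) with exactly the same multiplicities as $\mathrm{Ind}_{W_M(L,\lam)}^{W_{G_n}(L,\lam)}(\theta)$ decomposes on irreducible characters of the relative Weyl group, where $\theta$ is the character of $W_M(L, \lam)$ indexed by the $e$-quotient of $\nu$. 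Applied to a cuspidal $\theta$ and its image under $I_L^M$, this intermediate statement together with transitivity forces $I_L^{G_n}$ on the induced characters to coincide with $R_M^{G_n} \circ I_L^M$ on the generators $\theta$ of the character ring of $W_M(L,\lam)$.

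The main obstacle is precisely this intermediate comparison, which is the technical heart of the Brou\'e--Malle--Michel and Enguehard papers. For $GL(n,q)$ it generalises the Howlett--Lehrer comparison theorem from the case $e=1$: I would establish it by evaluating $R_M^{G_n}(\chi_\nu)$ through its character formula on regular semisimple elements whose $G_n$-centralizer is a torus realising $W_{G_n}(L, \lam)$ as its relative Weyl group, and matching the resulting expression against the standard induction formula for characters of $\mbZ_e \wr {\cS}_k$. For $U(n,q)$ the same argument applies provided one carefully tracks the sign ambiguities absorbed into the ``up to sign'' in the definition of $\chi_\mu$, with Ennola duality furnishing the bridge to the linear case.
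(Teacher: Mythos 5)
The first thing to note is that the paper does not prove this theorem at all: it is quoted from Brou\'e--Malle--Michel (\cite{BMM}, 3.2) and Enguehard (\cite{E}, Theorem B), and the paper explicitly remarks that the proof there is a case-by-case verification for ``generic groups.'' So there is no internal proof to measure your attempt against; the question is only whether your sketch would actually close.

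It would not, as written. Your ``intermediate statement'' --- that $R_M^{G_n}(\chi_\nu)$ decomposes on unipotent characters of $G_n$ with the same multiplicities (up to sign) as $\mathrm{Ind}_{W_M(L,\lambda)}^{W_{G_n}(L,\lambda)}(\theta)$ decomposes on irreducibles --- is not intermediate: it is exactly the identity $R_M^{G_n}\circ I_L^M = I_L^{G_n}\circ \mathrm{Ind}$ evaluated on each irreducible $\theta$, i.e.\ the theorem itself. Consequently the appeal to transitivity $R_L^{G_n}=R_M^{G_n}\circ R_L^M$ buys nothing; it only recovers the identity applied to the single cuspidal character $\lambda$ (the trivial character of the trivial group $W_L(L,\lambda)$), not on all of $\mathrm{Irr}(W_M(L,\lambda))$. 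The entire content of the theorem is therefore concentrated in the step you defer with ``I would establish it by evaluating \dots on regular semisimple elements.'' That is where the work lies: one must actually compute $\langle R_M^{G_n}(\chi_\nu),\chi_\rho\rangle$ and show it equals, with a coherent choice of signs, the corresponding multiplicity for $\mbZ_e\wr\cS_k$. For $GL(n,q)$ and $U(n,q)$ this is feasible because unipotent characters are uniform, and it is essentially the Farahat-map computation the paper carries out in Section \ref{Fock2} (see the Proposition there and the remark identifying $\langle\chi_\nu, R_M^{G_n}(\chi_\lambda\times\chi_\mu)\rangle$ with $\pm c^{\mu}_{\nu/\lambda}$, a Littlewood--Richardson coefficient); but your proposal neither performs this computation nor reduces to it. Be careful, too, with the claim that this ``generalises the Howlett--Lehrer comparison theorem'': for $e>1$ the relative Weyl group $\mbZ_e\wr\cS_k$ does not arise from ordinary Harish-Chandra theory, and the absence of such a general comparison theorem is precisely why \cite{BMM} resort to generic groups and case analysis.
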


The theorem is proved case by case  for "generic groups", 
and thus for finite reductive groups.
We have stated it only for $G_n$.

We state a refined version of the theorem involving CRDAHA
and prove it in Section \ref{reflection}.

\section{Heisenberg algebra, Fock space}\label{Fock}

Throughout this section we use the notation of (\cite{SV},4.2, 4.5, 4.6).

The affine Kac-Moody algebra  $\widehat{{\mathfrak s \ell}_{\ell}}$ is generated by
elements $e_p, f_p, p=0, \ldots \ell -1$, satisfying Serre relations (\cite{SV}, 3.4).
We have $\widehat{{\mathfrak s \ell}_{\ell}}= {\mathfrak s \ell}_{\ell} \otimes \mbC[t,t^{-1}] $.

The Heisenberg algebra is the Lie algebra $\fH$  generated by 
$1, b_r, b'_r$, $r \geq 0$ with relations
$[b'_r, b'_s]= [b_r, b_s] =0$, $[b'_r, b_s] = r1\delta_{r,s}$, $r,s \geq 0$ 
(\cite{SV}, 4.2). In $U({\fH})$ we then have 
elements $b_{r_1}b_{r_2}\ldots $ with $\sum_i r_i=r$. If $\lam \in {\mathcal P}$ we then have the element $b_{\lam}=b_{\lam_1}b_{\lam_2}\ldots$, and then
for any symmetric function $f$ the element
$b_f= \sum_{\lam \in {\mathcal P}} z_{\lam}^{-1}
< P_{\lam},f >b_{\lam}$. Here $P_{\lam}$ is a power sum symmetric function
and $z_{\lam}= \prod_ii^{m_i}m_i!$ where $m_i$ is the number of parts of $\lam$
equal to $i$. The scalar product $ < , >$ is the one used in symmetric functions,
where the Schur functions form an orthonormal basis (see\cite{M}).

We now define Fock spaces ${\cF}_m$, ${\cF}_{m,\ell}^{(d)}$ and ${\cF}_{m,\ell}^{(s)}$,
where $m > 1$.
We first define the Fock space, ${\cF}_{m}$ of rank $m$ and level $1$ (\cite{SV}, 4.5).
Choose a basis 
$(\epsilon_1, \ldots \epsilon_m)$ of ${\mbC}^m$.
The Fock space ${\cF}_{m}$ is  defined as the space 
of semi-infinite wedges of the $\mbC$-vector space ${\mbC}^m \otimes \mbC[t,t^{-1}]$. 
If $d \in \mbZ$, let ${\cF}_{m}^{(d)}$ be the subspace of  elements of the form
$u_{i_1}\wedge u_{i_2} \ldots $, $i_1 > i_2 \ldots $, 
where $u_{i-jm}=\epsilon_i \otimes t^j$, $i_k=d-k+1$ for $k \gg 0$. 
Then ${\cF}_{m}= \oplus_{d \in {\mathbb Z}} {\cF_{m}}^{(d)}$.
If we set $|\lambda,d>= u_{i_1}\wedge u_{i_2} \ldots $, $i_k=\lam_k+d-k+1$, the elements
$|\lambda,d>$ with $\lambda \in \cP$ form a basis of ${\cF_{m}}^{(d)}$.
Then $\widehat{{\mathfrak s \ell}_m}$ acts on ${\cF_{m}}^{(d)}$. This 
set-up has been studied by Leclerc and Thibon (\cite{LT1}, \cite{LT2}).

Similarly choose a basis 
$(\epsilon_1, \ldots \epsilon_m)$ of ${\mbC}^m$  and a basis
$({\epsilon_1}^{'}, \ldots {{\epsilon}_{\ell}}^{'})$ of ${\mbC}^{\ell}$. 
 The Fock space ${\cF}_{m,\ell}$ of rank $m$ and level $\ell$
is  defined as the space of semi-infinite wedges, i.e. elements of the form
$u_{i_1}\wedge u_{i_1} \ldots $, $i_1 ,i_2 \ldots $, where the $u_j$ are vectors in
  a $\mbC$-vector space 
${\mbC}^m \otimes {\mbC}^{\ell} \otimes  {\mbC}[z, z^{-1}]$
given by  $u_{{i+(j-1)}m-km\ell}=\epsilon_i \otimes {\epsilon}_j^{'}\otimes z^k$,
with $i=1,2, \ldots m$, $j=1,2, \ldots \ell$, $k \in \mbZ$. 
Then $\widehat{{\mathfrak s \ell}_{\ell}}$,   $\widehat{{\mathfrak s \ell}_m}$
and $\fH$ act on the space (\cite{SV}, 4.6). 

Let $d \in {\mathbb Z}$.  We have a decomposition
${\cF}_{m,\ell}= \oplus_{d \in {\mathbb Z}} {\cF}_{m,\ell}^{(d)}$ 
defined using semi-infinite wedges, as in the case of
${\cF}_{m}$. 
Then ${\cF}_{m,\ell}^{(d)}$ can be identified 
with the space ${\Lambda}^{d+{\infty}/2}$ defined by Uglov (\cite{U}, 4.1).
This space has a basis which Uglov indexes by  $\mathcal{P}$
or by pairs $(\lambda, s)$ where 
$\lambda\in \mathcal{P}^m$ and $s= (s_p)$ is an $m$-tuple of integers
with $\sum_p= s_p=d$. 
There is a bijection between the two index sets given by 
$\lambda \rightarrow ({\lambda}^*, s)$ where ${\lambda}^*$
is the $m$-quotient of $\lambda$ and $s$ is a particular labeling
of the $m$-core of $\lambda$ (\cite{U}, 4.1, 4.2).

 There is a subspace ${\cF}_{m,\ell}^{(s)}$ of ${\cF}_{m,\ell}^{(d)}$,
 the Fock space associated with $(s)$, 
which is a weight space for the $\widehat{{\mathfrak s \ell}_{\ell}}$  
action (\cite{SV}, p.982). We have 
${\cF}_{m,\ell}^{(d)}= \oplus {\cF}_{m,\ell}^{(s)} $, the sum of weight spaces.
Here we can define a basis $\lbrace|\lambda,s> \rbrace$ with 
$\lambda \in {\cP}^{\ell}$ of ${\cF_{m,\ell}}^{(s)}.$
 The spaces  $  {\cF}_{m,\ell}^{(s)} $ were also
studied by Uglov. 

The endomorphism of ${\mbC}^m \otimes \mbC[t,t^{-1}]$ induced by multiplication by
$t^r$ gives rise to a linear operator $b_r$ and its adjoint $b_r'$ on 
${\cF_{m}}^{(d)}$, and thus to an action of $\fH$ on ${\cF_{m}}^{(d)}$.
We also have an action of $\fH$ by operators $b_r, b_r'$ on  ${\cF_{m, \ell}}^{(s)}$,
and this is the main result that we need (\cite{SV}, p.982).

 We now choose a fixed $\ell$-tuple $s$. With suitable parameters of $H(\G_n)$, each $n$, 
the $\mbC$-vector space
$[\cO(\G)]$ is then canonically isomorphic to  ${\cF}_{m,\ell}^{(s)} $.
We then have the following $\mbC$-linear isomorphisms (\cite{SV}, p.990, 5.20):

\begin{align}
[\cO(\G)] \rightarrow R(\G) \rightarrow {\cF}_{m,\ell}^{(s)}, \nonumber \\
\Delta_{\lam}  \rightarrow \bar{L}_{\lam} \rightarrow  |\lam, s >.
\end{align}

Consider the Fock space ${\cF}_{m,\ell}^{(s)}$ with basis indexed by 
$\lbrace|\lam, s>\rbrace$ where $\lam \in {\mathcal P}^{\ell}$.
The element $b_{s_{\mu}} \in \fH$, i.e. $b_f$ where $f=s_{\mu}$, a Schur 
function, acts by an operator $S_{\mu}$ on the space.
The functor $a_{\mu}^*$ on $[{\cO}(\G)]$ (see Section 3)
is now identified with $S_{\mu}$ by (\cite{SV}, Proposition 5.13, p.990).

Remark. The bijection between $m$-core partitions and the $m$-tuples $(s)$
as above has been studied by combinatorialists (see e.g. \cite{GKS}).

\section{Fock space revisited}\label{Fock2}

References for the combinatorial definitions in this section are \cite{LT1}, \cite{LT2}.
  Given a partition $\mu$ we introduce three operators
on Fock space: an operator $S_{\mu}$ defined by Leclerc and Thibon \cite{LT1},
an operator ${{\cF}}^*(\phi_{\mu})$ defined by Farahat \cite{F} on 
representations of the symmetric groups ${\cS}_n$, 
and the operators ${\mathcal L}_{\mu}$  of Lusztig induction on $G_n$.
The algebra of symmetric functions in $\lbrace x_1, x_2, \ldots \rbrace$
is denoted by $\Lambda$.

The integers $\ell, m$ in Section \ref{Fock} will now be replaced by 
a positive integer $e$ which was used in the context of blocks of $G_n$. Thus
$\G_n=\mu_e\wr {\cS}_n$. 

%We now regard ${\cF}_{e,e}^{(s)} $ as a $\mbC$-vector space on which 
%$\hat{{\mathfrak s \ell}_e}$ and $\fH$ act.
%Then this space has  basis elements $\lbrace|\lam, s>\rbrace$ where $\lam \in {\mathcal P}^{e}$,
%as above, by (\cite{U}, 4.1).

First consider the space ${\cF}_e^{(d)} $ where $d \in \mbZ$,
 with basis elements $\lbrace|\lam, d>\rbrace$ where $\lam \in {\mathcal P}$.
Leclerc and Thibon \cite{LT1} introduced elements in $U(\fH)$ which we write 
in our previous notation as $b_{h_{\rho}}$ and $b_{s_{\mu}}$, 
acting as operators $V_{\rho}$  and $S_{\mu}$ on ${\cF}_e^{(d)} $
where $\rho, \mu \in \mathcal P$ and $h_{\rho}$ is a homogeneous 
symmetric function.  These operators have a  combinatorial description
as follows. Here we will write $|\lam>$ for $|\lam, d>$.
 
First they define commuting operators $V_k$,
($ k \geq 1$  on ${\cF}_e^{(d)} $ defined by

\begin{align}
 V_k(|\lam>)= \sum_{\mu}(-1)^{-s(\mu/\lambda)}{|\mu>},
\end{align} 
where the sum is over all $\mu$ such that $\mu/\lambda$ is a horizontal
$n$-ribbon strip of weight $k$, and $s(\mu/\lambda)$ is the "spin"
of the strip.

 Here a ribbon is the same as a rim-hook, i.e. a skew-
partition which does not contain a $2 \times 2$ square. The head of the ribbon
is the upper right box and the tail is the lower left
box. The spin is the leg length of the ribbon, i.e. the
number of rows $-1$.

\begin{define}(see \cite{L}) A horizontal $n$-ribbon strip of weight $k$ is a tiling of 
a skew partition by $k$ $n$-ribbons such that the topright-most square of every ribbon
touches the northern edge of the shape. The spin
of the strip is the sum of the spins of all the ribbons.
\end{define}

It can be shown that a tiling of a skew partition as above is unique.
More generally we can then define $V_{\rho}$ where $\rho$ is
a composition. If $\rho= \lbrace \rho_1,\rho_2, \ldots \rbrace$ then
$V_{\rho}= V_{\rho_1}.V_{\rho_2}\ldots$.
Finally we define operators  $S_{\mu}$ acting on ${\cF}_e^{(d)} $ 
which we connect to Lusztig induction.

\begin{define}
$S_{\mu}=\sum_{\rho} {\kappa}_{\mu\rho}V_{\rho}$ where the
${\kappa}_{\mu\rho}$ are inverse Kostka numbers (\cite{LT1}, p.204),
(\cite{L}, p.8). 
\end{define}

{\bf Remark.} Let $p_e(f)$ denote the plethysm by the power function in
$\Lambda$, i.e. $p_e(f(x_1, x_2, \ldots))=
f(x_1^e, x_2^e, \ldots)$. (This is related to a Frobenius morphism;
see \cite{LT2}, p.171.) 
In fact in \cite{LT1} $\fH$ is regarded as a
$\mbC(q)$-space where $q$ is an indeterminate. 
Then $V_{\rho}$ and $S_{\mu}$ are $q$-analogs
of multiplication by $p_e(h_{\rho})$ and $p_e(s_{\mu})$ in $\Lambda$.

Next, let ${\mathcal A}_n$ be the
category of unipotent representations of $G_n$. Let ${\mathcal A} =
\oplus_{n \geq 0} [{\mathcal A}_n].$ We recall from Section 4 that
the unipotent characters of  $G_n$ are denoted by $\lbrace \chi_{\lam}\rbrace$ where $\lam \vdash n$. 
 We now regard $\mathcal A$ as having a basis  $[ \chi_{\lambda}] $ where
$\lambda$ runs through all partitions. 
Then ${\mathcal A}$ is isomorphic to  ${\cF}_e^{(d)} $ as a
$\mbC$-vector space, since ${\mathcal A}$ also has
a basis indexed by partitions. 
%Thus $U(\fH)$ acts on ${\mathcal A}$.

% We can regard the operators 
%$S_{\mu}$ as acting on $\mathcal A$  with the  basis 
%$[{\chi}_{\lam}]$ in the obvious way.
%Then $S_{\mu}$ takes $[{\chi}_{\lam}]$ where $\lam \vdash n$ to a linear
%combination of $[{\chi}_{\mu}]$, where the $\mu$ are partitions of
%$n+ke$.

We now define Lusztig operators ${\mathcal L}_{\mu}$ on $\mathcal A$
and then relate them to the $S_{\mu}$.

\begin{define} Let $\mu \vdash k$. 
The Lusztig map ${\mathcal L}_{\mu}: \mathcal A \rightarrow \mathcal A$ is
as follows. Define ${\mathcal L}_{\mu}: [{\mathcal A}_n]\rightarrow [{\mathcal A}_{n+ke}]$
 by $[\chi_{\lambda}] \rightarrow [R^{G_{n+ke}}_L(\chi_{\lambda}
\times \chi_{\mu})]$, where $L=G_n \times GL(k, q^e)$ or $L=G_n \times U(k, q^e)$,
 an $e$-split Levi subgroup  of $G_{n+ke}$. 
\end{define}

Finally, consider the characters of ${\cS}_n$.
We denote the character  corresponding
to $\lam \in {\mathcal P}_n$ as $\phi_{\lam}$. We also use $\lambda \in {\mathcal P}_n$
to denote representatives of conjugacy classes of ${\cS}_n$.
Let ${\cC}_n$ be the category of representations of $\cS_n$
and $\cC=\oplus_{n \geq 0} [{\cC}_n]$. 

\medbreak\noindent 
Given  partitions $\nu \vdash (n+ke)$, $\lam \vdash n$ such that $\nu / \lam$
 is defined, Farahat \cite{F} 
has defined a character $\hat{\phi}_{\nu/\lam}$ of $S_k$, as follows.
Let the $e$-tuples $(\nu^{(i)})$, $(\lam^{(i)})$ be the $e$-quotients 
of $\nu$ and $\lam$. Then $\epsilon \prod_i{\phi}_{({\nu^{(i)}}/{\lam^{(i)}})}$,
where $\epsilon = \pm 1$ 
is a character of a Young subgroup of $S_k$, which induces up to the
character $\hat{\phi}_{\nu/\lam}$ of  $S_k$.

%Using the characteristic map we get a corresponding skew symmetric function 
%$\epsilon \prod{s_{(\nu^{(i)}/{\lam^{(i)}})}}$. This has been described in 
%(\cite{M}, p.91). 

We will instead use an approach of Enguehard (\cite{E}, p.37) which is more 
conceptual and convenient for our purpose.

\begin{define} Let $\mu \vdash k$.
The Farahat map $\cF: [\cC_{ek}] \rightarrow [\cC_{k}]$
is defined by $(\cF \chi)(\mu)=\chi(e\mu)$ . It is then extended
to the map $\cF :  [\cC_n] \rightarrow  [\cC_{k}] \times[\cC_{\ell}]$, 
by first restricting a representation of $S_n$ to
$S_{ek}\times S_{\ell}$ and then applying $\cF$ to $[\cC_{ek}]$
and the identity to $[\cC_{\ell}]$.
\end{define}

Fix $\mu \vdash k$. Taking adjoints 
and denoting  ${{\cF}}^*$ by ${{\cF}}^*(\phi_{\mu})$ we then have,
for $\lam \vdash n$:

\begin{define} 
 ${{\cF}}^*(\phi_{\mu}):[\cC_{n}] \rightarrow [\cC_{n+ek}],
 \ \
 \phi_{\lam} \rightarrow 
 {\rm Ind}_{\cS_{ek} \times \cS_{n}}^{\cS_{n+ek}}
(\cF( \phi_{\mu}) \times \phi_{\lam})$. 
\end{define}

By the standard classification of maximal tori in $G_n$ we can denote
a set of representatives of the $G_n$-conjugacy classes of the tori by
$\lbrace T_w  \rbrace$, where $w$ runs over a set of representatives
for the conjugacy classes of $S_n$.
We then have that the unipotent character 
$\chi_{\lambda}= {1\over|S_n|}\sum_{w \in S_n} {\lambda}(w) R_{T_w}^{G_n}(1) $
(see e.g. (\cite{FS}, 1.13).
Here, as before, $R_{T_w}^{G_n}(1) $ is Lusztig induction.

We assume in the proposition below  that when $G_n=U(n,q)$ that
$e \equiv 0\ ({\rm mod}\ 4)$. This is the case that is analogous to 
the case of $GL(n,q)$. The other cases for $e$ require some straightforward
modifications which we mention below.
The proof of the  proposition has been sketched by Enguehard (\cite{E},p.37)
when $G_n=GL(n,q)$. 

Let $M$ be the $e$-split 
Levi subgroup of $G_n$ isomorphic to $GL(k,q^e) \times GL_{\ell}$.
 We denote by $^*R_M^{G}$ the adjoint of the Lusztig map 
$R_M^{G}$. It is an analogue of the map ${{\cF}}^*$, 
 and this is made precise below.

 Given $\lambda \vdash n$, we have a bijection 
$ \phi_{\lam} \leftrightarrow \chi_{\lambda} $ 
between  $[{\cC}_n]$ and  $[\cA_n]$. We then have an
obvious bijection 
$\psi: \phi_{\lam} \leftrightarrow \chi_{\lambda} $ 
between $\cC$ and  ${\mathcal A}$.

\begin{prop} Let $G=GL(ek,q)$ or $U(ek,q)$. In the case of $U(ek,q)$
we assume $e \equiv 0 \ ({\rm mod}\ 4)$.
Let $M \cong GL(k,q^e)$, a subgroup
of $G$. Let  $\psi: \phi_{\lam} \leftrightarrow \chi_{\lambda} $ 
between $\cC$ and  ${\mathcal A}$ be as above.

Then 
(i) If $\lambda \vdash ek$,
$\psi({\cF}(\phi_{\lambda}))= ^*R_M^{G}(\chi_{\lambda})$

(ii) If $\mu \vdash k$, 
$\psi({\cF}^*(\phi_{\mu}))= R_M^{G}(\chi_{\mu})$= $ {\mathcal L}_{\mu} $.
\end{prop}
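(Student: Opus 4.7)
The plan is to establish (ii) by a direct character-theoretic computation and then deduce (i) from it by adjointness. Starting from the Green/Deligne--Lusztig formula recalled just before the proposition, we have
$$\chi_\mu = \frac{1}{|S_k|} \sum_{w \in S_k} \mu(w)\, R_{T_w^M}^M(1),$$
where $T_w^M$ denotes the maximal torus of $M = GL(k,q^e)$ indexed by (the conjugacy class of) $w \in S_k$. Applying $R_M^G$ and using the transitivity $R_M^G \circ R_{T_w^M}^M = R_{T_w^M}^G$ of Lusztig induction reduces everything to understanding $R_{T_w^M}^G(1)$ for each $w$.

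The key geometric observation is an identification of tori: a cycle of length $j$ in $w$ contributes a factor $\mathbb{F}_{q^{ej}}^\times$ to $T_w^M$, and under the embedding $M \hookrightarrow G$ this factor sits in $G$ as a torus indexed by a cycle of length $ej$ in $S_{ek}$. Hence $T_w^M$ is $G$-conjugate to $T_{ew}^G$, where $ew$ denotes the element of $S_{ek}$ whose cycle type is $e$ times that of $w$. Inverting the character formula on $G$ via orthogonality of $S_{ek}$-characters gives $R_{T_{ew}^G}^G(1) = \sum_{\lambda \vdash ek} \lambda(ew)\, \chi_\lambda$, and combining yields
$$R_M^G(\chi_\mu) = \sum_{\lambda \vdash ek} \left( \frac{1}{|S_k|}\sum_{w \in S_k} \mu(w)\,\lambda(ew) \right) \chi_\lambda.$$

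The parenthesised coefficient is, by the definition of $\cF$, exactly $\langle \phi_\mu, \cF(\phi_\lambda)\rangle_{S_k}$, which in turn equals $\langle \cF^*(\phi_\mu), \phi_\lambda\rangle_{S_{ek}}$ by the defining property of the adjoint. Hence $R_M^G(\chi_\mu) = \psi(\cF^*(\phi_\mu))$, which is (ii); note that this is the $n=0$ specialisation of the definition of $\mathcal{L}_\mu$, so the second equality of (ii) is immediate. For (i), I use adjointness on both sides: for any $\mu \vdash k$,
$$\langle {}^*R_M^G(\chi_\lambda), \chi_\mu^M\rangle_M = \langle \chi_\lambda, R_M^G(\chi_\mu^M)\rangle_G = \langle \phi_\lambda, \cF^*(\phi_\mu)\rangle_{S_{ek}} = \langle \cF(\phi_\lambda), \phi_\mu\rangle_{S_k},$$
and expanding ${}^*R_M^G(\chi_\lambda)$ in the basis of unipotent characters of $M$ identifies it with $\psi(\cF(\phi_\lambda))$.

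The main obstacle is the unitary case. For $GL(ek,q)$ the Green formula and the torus identification are sign-free, and the computation above is what Enguehard sketches. For $U(ek,q)$ two sign subtleties enter: the ``up to sign'' convention $\chi_\lambda \leftrightarrow \lambda$ of \cite{FS}, and the Ennola-type signs $(-1)^{r(T)}$ that intervene in the Deligne--Lusztig expansion and in the change of ambient group when $T_w^M$ is viewed inside $M$ versus inside $G$. The hypothesis $e \equiv 0 \pmod 4$ is precisely what forces $GL(k,q^e)$ to sit in $U(ek,q)$ as an $e$-split Levi and makes the two sign contributions cancel cycle-by-cycle, leaving the same clean formula; verifying this cancellation carefully is the delicate point and is where I would spend most of the proof's effort. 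The other parities of $e$ for $U$ would require tracking a different, but uniform, sign in the torus identification.
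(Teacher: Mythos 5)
Your proposal is correct and is essentially the paper's argument run in the mirror direction: the paper computes $\psi(\cF(\phi_\lambda))$ via the expansion of $\chi_\lambda$ into Deligne--Lusztig characters and the torus reindexing $T_w^M \leftrightarrow T_{ew}^G$, matches it against the formula for ${}^*R_M^{G}(\chi_\lambda)$ from (\cite{FS}, Lemma 2B), and then obtains (ii) by adjointness, whereas you compute $R_M^G(\chi_\mu)$ directly (using transitivity of Lusztig induction and orthogonality in place of the cited lemma) and deduce (i) by adjointness. The key ingredient --- that evaluation at $ew$ in the definition of $\cF$ matches the reparametrization of tori under $M\hookrightarrow G$ --- is identical, and your treatment of the unitary signs is no less complete than the paper's.
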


\begin{proof}
We have $\psi({\cF}(\phi_{\lambda}))= 
{1\over{|{\cS}_k|}} \sum_{w \in {\cS_k}}
(\cF\phi_{\lambda})(w)R^M_{T_w}(1)
={ 1\over{|{\cS}_k|}} \sum_{w \in {\cS_k}}
\phi_{\lambda}(ew)R^M_{T_w}(1)$.

Since the torus parametrized by $w$ in $M$ is parametrized by $ew$ in $G$,
we can write this as ${1\over{|{\cS}_k|}} \sum_{w \in {\cS_k}}\phi_{\lambda}(ew)R^M_{T_{ew}}(1)$.

On the other hand, we have (see \cite{FS}, Lemma 2B), using the parametrization
of tori in $M$,
$^*R_M^{G}(\chi_{\lambda})={ 1\over{|{\cS}_k|} }\sum_{w \in {\cS_k}}\phi_{\lambda}(w)R^M_{T_w}(1)$.
This proves (i). Then (ii) follows by taking adjoints.
\end{proof}

The proposition clearly generalizes to the subgroup $M \cong GL(k.q^e)\times G_{\ell}$
of $G_n$ where $n=ek+\ell$.
In the case of $U(n.q)$, if $e$ is odd we replace $e$ by $e'$ where $e'=2e$ 
with $M \cong GL(k,q^{e'})$, and if $e \equiv 2 \ {\rm mod}\ 4 $
by $e'$ where $e'=e/2$ with $M \cong U(k,q^{e'})$, the proof being similar.

Using the isomorphism between the spaces ${\mathcal A}$,   
${\mathcal C}$ and ${\cF}_e^{(d)} $, we now regard the operators
 $ {\mathcal L}_{\mu} $, ${\cF}^*(\phi_{\mu})$ and $ {\mathcal S}_{\mu} $
as acting on ${\cF}_e^{(d)} $.

We now prove one of the main results in this paper.
\begin{thm}\label{Lusztig}
The operators ${\mathcal L}_{\mu}$ and $S_{\mu}$ on ${\cF}_e^{(d)} $
coincide.
\end{thm}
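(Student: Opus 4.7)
The plan is to reduce the theorem, via the preceding Proposition, to an identity of operators on $\cF_e^{(d)}$, and then to show that each of the two operators becomes multiplication by the plethysm $p_e(s_\mu)$ on the ring of symmetric functions $\Lambda$. By the Proposition (and its generalization sketched immediately after the proof, which covers the Levi $GL(k,q^e)\times G_\ell$), the intertwining $\psi\circ\cF^*(\phi_\mu) = \cL_\mu\circ\psi$ holds, so under the identifications $|\lambda\rangle\leftrightarrow\phi_\lambda\leftrightarrow\chi_\lambda$ it suffices to prove $\cF^*(\phi_\mu) = S_\mu$ as endomorphisms of $\cF_e^{(d)}$.

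Composing the identification with the Frobenius characteristic ${\rm ch}:\cC\to\Lambda$, $\phi_\lambda\mapsto s_\lambda$, carries $\cF_e^{(d)}$ onto $\Lambda$. Under ${\rm ch}$, the Farahat map $\cF$ translates to the adjoint of the plethystic endomorphism $p_e:\Lambda\to\Lambda$, $f\mapsto f(x_1^e,x_2^e,\ldots)$, which sends $p_r$ to $p_{er}$: writing ${\rm ch}(\chi)=\sum_\nu z_\nu^{-1}\chi(\nu)p_\nu$, one computes $(\cF\chi)(\mu)=\chi(e\mu)=\langle{\rm ch}(\chi),p_{e\mu}\rangle=\langle{\rm ch}(\chi),p_e(p_\mu)\rangle$, giving ${\rm ch}\circ\cF=p_e^*\circ{\rm ch}$ and hence by adjointness ${\rm ch}\circ\cF^*=p_e\circ{\rm ch}$. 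Therefore the element $\cF^*(\phi_\mu)\in[\cC_{ek}]$ has characteristic $p_e(s_\mu)$, and since induction from Young subgroups corresponds under ${\rm ch}$ to multiplication in $\Lambda$, the operator $\cF^*(\phi_\mu)$ on $\Lambda\cong\cF_e^{(d)}$ acts as multiplication by $p_e(s_\mu)$. On the Leclerc--Thibon side, the remark following the definition of $S_\mu$ records (at $q=1$) that $S_\mu$ on $\Lambda$ is also multiplication by $p_e(s_\mu)$: via the inverse Kostka expansion $S_\mu=\sum_\rho\kappa_{\mu\rho}V_\rho$ this reduces to the statement that $V_k$ acts as multiplication by $p_e(h_k)$, which is the classical Littlewood identity expressing $p_e(h_k)\cdot s_\lambda$ as the signed sum $\sum_\nu(-1)^{s(\nu/\lambda)}s_\nu$ over horizontal $e$-ribbon strips of weight $k$ --- precisely the defining formula for $V_k$. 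Hence both operators equal multiplication by $p_e(s_\mu)$, and the theorem follows.

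The one step that is not purely formal is the matching of sign conventions: Leclerc--Thibon's spin $(-1)^{s(\nu/\lambda)}$ must agree with the Murnaghan--Nakayama sign that enters Farahat's construction through the characteristic map. Both are products of $(-1)^{\text{height}-1}$ over the constituent $e$-ribbons, so they coincide, but tracking this carefully --- together with the sign ambiguity in the unitary case absorbed into the definition of $\chi_\lambda$ and the residual cases $e$ odd and $e\equiv 2\pmod 4$ noted after the Proposition --- is the main obstacle. Once these conventions are aligned, both operators collapse to multiplication by $p_e(s_\mu)$ on $\Lambda$ and the theorem is immediate.
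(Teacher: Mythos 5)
Your proposal is correct and follows essentially the same route as the paper: both reduce via the Proposition to showing $\cF^*(\phi_\mu)=S_\mu$, both transport the question to $\Lambda$ by the characteristic map, identify the Farahat map with the adjoint $\psi_e$ of the plethysm $p_e$, and invoke the Leclerc--Thibon fact that $S_\mu$ is multiplication by $p_e(s_\mu)$ (the paper phrases this as a chain of equal inner products $(\hat\phi_{\nu/\lambda},\phi_\mu)=\cdots=(S_\mu[\chi_\lambda],[\chi_\nu])$ rather than as an operator identity, but the content is identical). Your closing remarks on matching the spin sign with the Murnaghan--Nakayama sign make explicit a point the paper leaves implicit.
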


\begin{proof}
We have shown above that ${\cF}^*(\phi_{\mu})= {\mathcal L}_{\mu}$.
We will now show that ${{\cF}}^*(\phi_{\mu})= S_{\mu}$.

More generally we consider the character $\hat{\phi}_{\nu/\lam}$ of $S_k$
defined by Farahat, where $\nu \vdash (n+ke)$ and $\mu \vdash n$,
and describe it using $\cF$.  
 The restriction of $\phi_{\nu}$ to
$S_n \times S_{ke}$ can be written as a sum of $ \phi_{\lambda} \times \phi_{\nu/\lam}$
where $\phi_{\nu/\lam}$ is a (reducible) character of $S_{ke}$, and characters
not involving $\phi_{\lambda}$. We then define $\hat{\phi}_{\nu/\lam}= \cF({\phi}_{\nu/\lam})$,
a character of $S_k$. We then note that $\hat{\phi}_{\nu/\lam}(u)= {\phi}_{\nu/\lam}(eu)$.
Using the characteristic map we get a corresponding skew symmetric function 
$ s_{\nu^*/\lam^*}$. This is precisely the function which
has been described in (\cite{M}, p.91), since it is derived from the usual 
symmetric function $s_{\nu/\lam}$ by taking $e$-th roots of variables.
 Using the plethysm function $p_e$ and its adjoint $\psi_e$ (\cite{LLT},p.1048)) 
 there we get  $s_{\nu^*/\lam^*}=\psi_e(s_{\nu/\lam})$.

By the above facts we get
% $$(\chi_{\nu}, R_L^{G_{n+ke}}({\chi}_{\lambda}\times {\chi}_{\mu})) =
$$(\hat {\phi}_{\nu/\lambda},{\phi}_{\mu})$$
%$$ =   (\prod{s_{(\nu^{(i)}/{\lam^{(i)}})}}, s_{\mu})  $$
 $$ =   (s_{\nu^*/\lam^*}, s_{\mu})  $$                             
$$= ({\psi_e}(s_{\nu/\lambda}), s_{\mu})  $$
$$=(s_{\nu/\lambda},p_e(s_{\mu})),$$
$$=(p_e(s_{\mu}).s_{\lambda}, s_{\nu})$$
$$=(S_{\mu}[ \chi_{\lambda}],[ \chi_{\nu}] ).$$ 

The last equality can be seen as follows. There is a
$\mbC$-linear isomorphism between the algebra 
 $\Lambda$ and ${\cF}_e^{(d)} $, since both have bases 
indexed by $\cP$. Under this isomorphism multiplication by
the symmetric function $p_e(s_{\mu})$ on $\Lambda$
corresponds to the operator $S_{\mu}$ on Fock space 
 (see \cite{LT1}, p.6).

This proves that
${\mathcal L}_{\mu}=S_{\mu}$. 
\end{proof}

We recall that ${\widehat {sl}_e}$ acts on ${\cF}_e^{(d)} $ and hence
on $\mathcal A$.

\begin{cor} The highest weight vectors
$V_{\rho}\emptyset$ of the irreducible components of the
${\widehat {sl}_e}$-module $\mathcal A$ (\cite{LLT},p.1054) can
be described by Lusztig induction.
\end{cor}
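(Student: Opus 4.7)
The plan is to invert the defining expansion of $S_\mu$ in terms of the $V_\rho$ and then invoke Theorem \ref{Lusztig}. Since $V_\rho$ and $S_\mu$ come respectively from the Heisenberg elements $b_{h_\rho}$ and $b_{s_\mu}$, the change of basis between these two families of operators on $\cF_e^{(d)}$ is controlled by the change of basis between $h_\rho$ and $s_\mu$ in $\Lambda$. The definition $S_\mu=\sum_\rho\kappa_{\mu\rho}V_\rho$ is precisely the identity $s_\mu=\sum_\rho\kappa_{\mu\rho}h_\rho$ in $\Lambda$, so inverting via the Kostka expansion $h_\rho=\sum_{\mu\vdash|\rho|}K_{\mu\rho}s_\mu$ yields
\[
V_\rho=\sum_{\mu\vdash k}K_{\mu\rho}\,S_\mu,\qquad k=|\rho|,
\]
with non-negative integer coefficients $K_{\mu\rho}$.

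Next I would substitute Theorem \ref{Lusztig}, which asserts $S_\mu=\cL_\mu$ as operators on $\cF_e^{(d)}\cong\cA$, to obtain $V_\rho=\sum_{\mu\vdash k}K_{\mu\rho}\,\cL_\mu$. Evaluating both sides at the vacuum $\emptyset\in\cA$, which corresponds to the trivial character of the trivial group $G_0$, the definition of $\cL_\mu$ gives
\[
V_\rho\,\emptyset=\sum_{\mu\vdash k}K_{\mu\rho}\bigl[R_L^{G_{ek}}(\chi_\mu)\bigr],
\]
where $L\cong GL(k,q^e)$ in the linear case, and $L$ is the appropriate $e$-split Levi of $U(ek,q)$ in the unitary case as detailed in the remarks after the proposition. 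This exhibits the highest weight vector $V_\rho\emptyset$ as an explicit non-negative integer combination of Lusztig induced unipotent characters, which is exactly the corollary.

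The one point needing justification is the initial step, namely that the identity $b_{h_\rho}=\sum_\mu K_{\mu\rho}b_{s_\mu}$ in $U(\fH)$ descends faithfully to the operators $V_\rho$ and $S_\mu$ on $\cF_e^{(d)}$. This follows from the same $\mbC$-linear identification between $\Lambda$ and $\cF_e^{(d)}$ (under which multiplication by $p_e(f)$ on $\Lambda$ corresponds to the Heisenberg operator attached to $f$) that was invoked at the end of the proof of Theorem \ref{Lusztig}. All other ingredients are routine: the Kostka inversion is standard, and the case analysis in the unitary case for $e$ odd or $e\equiv 2\pmod 4$ has already been carried out above.
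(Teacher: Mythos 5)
Your argument is correct and is essentially the intended one: the paper states this corollary without a separate proof, as an immediate consequence of Theorem \ref{Lusztig}, and the Kostka inversion $V_\rho=\sum_{\mu\vdash k}K_{\mu\rho}S_\mu$ you perform (which is pure linear algebra on the operators, since $S_\mu=\sum_\rho\kappa_{\mu\rho}V_\rho$ is the definition, so no descent from $U(\fH)$ actually needs justifying) is the natural way to make that explicit. One small simplification worth noting: since $s_{(k)}=h_k$ the inverse Kostka matrix gives $V_k=S_{(k)}={\mathcal L}_{(k)}$ directly, hence $V_\rho={\mathcal L}_{(\rho_1)}{\mathcal L}_{(\rho_2)}\cdots$, and by transitivity of Lusztig induction $V_\rho\emptyset$ is a single induced character $R_L^{G_{e|\rho|}}(\chi_{(\rho_1)}\times\chi_{(\rho_2)}\times\cdots)$ with $L\cong\prod_iGL(\rho_i,q^e)$, rather than a nonnegative combination of several.
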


{\bf Remark.} In fact Leclerc and Thibon also have a parameter $q$ in their 
definition of $S_{\mu}$, since they deal with a deformed Fock space.
Thus $S_{\mu}$ can be regarded as a quantized
version of a Lusztig operator ${\mathcal L}_{\mu}$.

{\bf Remark.} In the notation of (\cite{LT2}, p.173) we have

$(s_{\nu^*/\lam^*, s_{\mu})=(s_{\nu_0/\lambda_0}s_{\nu_1/\lambda_1} \ldots
s_{\nu_{e-1}/\lambda_{e-1}},s_{\mu}}= c^{\mu}_{\nu/\lambda}$, where
the $c^{\mu}_{\nu/\lambda}$ are Littlewood-Richardson coefficients.
We now have $(\chi_{\nu}, R_M
^{G_n}({\chi}_{\lambda}\times
{\chi}_{\mu}))= {\epsilon}c^{\mu}_{\nu/\lambda} $, where
$\epsilon = \pm 1$.  In particular $c^{(k)}_{\nu/\lambda}$ 
is the number of tableaux of
shape $\nu$ such that $\nu/\lambda$ is a horizontal $e$-ribbon of
weight $k$. Thus the Lusztig operator ${\mathcal L}_k$ can
be described in terms of $e$-ribbons of weight $k$, similar to the
case of $k=1$ which classically is described by $e$-hooks.

\medbreak\noindent
\section{ CRDAHA and Lusztig induction}\label{reflection}

The main reference for parabolic induction in this section is \cite{SV}.

In this section we show a connection between the parabolic induction functor
$a_{\mu}^*$ on $[{\cO}(\G)]$
and the Lusztig induction functor ${\mathcal L}_{\mu}$ in $\cA$ using Fock space. 
In particular this gives an explanation
of the Global to Local Bijection for $G_n$ given in Theorem \ref{BMM}. This can
be regarded as a local, block-theoretic version of Theorem \ref{Lusztig}.

As mentioned in Section \ref{finite},
the unipotent characters $\chi_{\lam}$ in an $e$-block of $G_n$ are constituents of 
the Lusztig map $R^{G_n}_L(\lam)$ where $(L, \lam)$ is an $e$-cuspidal pair.
Up to sign, they are in bijection with the characters of $W_{G_n}(L, \lam)$,
and they all have the same $e$-core.

For our result we can assume $d=0$, which we do from now on.
We set $\ell= m = e$ as in Section \ref{Fock2}. 
%Let $s$ be a fixed $e$-charge of weight $0$.
We have spaces ${\cF}_e^{(0)}$ and  
${\cF}_{e,e}^{(0)}= \oplus_s{\cF}_{e,e}^{(s)}$ 
 where $s= (s_p)$ is an $e$-tuple of integers with $\sum_p s_p=0$. 
 We now fix such an $s$. 
 
 By (\cite{SV}, 6.17, 6.22, p.1010) we  have a  $U(\fH)$-isomorphism
 between ${\cF}_e^{(0)}$ and ${\cF}_{e,e}^{(0)}$. Let ${\cF}_e^{(s)} $ be
 the inverse image of ${\cF}_{e,e}^{(s)}$ under this isomorphism.
 We then have $\mbC$-isomorphisms from ${\cF}_{e,e}^{(s)}$ to $[\cO(\G)]$,  
and from ${\cF}_{e}^{(s)}$ to  ${\cA}^{(s)}$, where ${\cA}^{(s)}$ is the 
 the subspace of $\mathcal A$ spanned by $[\chi_{\lam}]$ where the 
 $\chi_{\lam}$ are in an $e$-block parametrized by the $e$-core
 labeled by $(s)$ (see Section \ref{Fock}).

 The spaces ${\cF}_e^{(s)} $, ${\cF}_{e,e}^{(s)}$,  $[\cO(\G)]$,
${\cA}^{(s)}$ have  bases  $\lbrace | \lambda, s >: \lam \in \cP \rbrace $, 
 $\lbrace| \lambda, s > : \lam \in {\cP}^e \rbrace$, 
 $\lbrace{\Delta}_{\lambda}:\lam \in {\cP}^e\rbrace$
 and   $[{\chi}_{\lam}]$ where $\lambda$ has $e$-core labeled by $s$,
respectively. 
 
We have maps $S_{\mu} : {\cF}_{e,e}^{(s)} \rightarrow {\cF}_{e,e}^{(s)}$,
$\mu \in {\cP}^e$,
 $S_{\mu} : {\cF}_e^{(s)}\rightarrow  {\cF}_e^{(s)}$, $\mu \in {\cP}$,
 ${\mathcal L}_{\mu}: {\cA}^{(s)} \rightarrow {\cA}^{(s)}$ and
$a_{\mu}^*:  [{\cO}(\G)]\rightarrow [{\cO}(\G)]$.

The following theorem can be regarded as a refined version of the
Global to Local Bijection of \cite{BMM}. The case $e=1$ is due to 
Enguehard (\cite{E}, p.37), where the proof is a direct verification
of the theorem from the definition of the Farahat map $\cF$ in 
${\mathcal S}_n$ (see Section \ref{Fock2}) and Lusztig induction in $G_n$.

\begin{thm} Under the isomorphism ${\cA}^{(s)} \cong [{\cO}(\G)]$
given by $[{\chi}_{\lam}] \rightarrow [{\Delta}_{\lambda^*}]$ where
$\lambda^*$ is the $e$-quotient of $\lambda$, 
 Lusztig induction ${\mathcal L}_{\mu}$ 
on ${\cA}^{(s)}$ with $\mu \in \cP$ corresponds to parabolic induction
$a_{\mu}^*$ on $[{\cO}(\G)]$ with $\mu \in {\cP}^e$.
\end{thm}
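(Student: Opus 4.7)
The plan is to identify both operators with the same Heisenberg operator $S_{\mu} = b_{s_\mu}$ and then transfer between the two Fock spaces via the $U(\fH)$-equivariant isomorphism recalled at the start of this section.

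First, assemble the identifications already in hand. On the CRDAHA side, the isomorphism ${\cF}_{e,e}^{(s)} \cong [{\cO}(\G)]$ from Section \ref{Fock} identifies the parabolic induction operator $a_{\mu}^*$ with the Heisenberg operator $S_{\mu}$, by (\cite{SV}, Proposition 5.13). On the finite group side, Theorem \ref{Lusztig} identifies Lusztig induction ${\mathcal L}_\mu$ with $S_\mu$ acting on ${\cF}_e^{(d)}$; since both $\mathcal{L}_\mu$ and $S_\mu$ preserve the $e$-core (Lusztig induction from an $e$-split Levi adds $e$-ribbons, and $S_\mu$ respects the weight-space decomposition), this restricts to an identification on the block subspace ${\cF}_e^{(s)} \cong {\cA}^{(s)}$.

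Next, invoke the $U(\fH)$-module isomorphism $\Phi : {\cF}_e^{(s)} \to {\cF}_{e,e}^{(s)}$ obtained by restricting the isomorphism ${\cF}_e^{(0)} \cong {\cF}_{e,e}^{(0)}$ of (\cite{SV}, 6.17, 6.22) to the weight space labelled by $s$. Uglov's bijection recalled in Section \ref{Fock} shows that $\Phi$ carries $|\lambda, s\rangle \in {\cF}_e^{(s)}$ to $|\lambda^*, s\rangle \in {\cF}_{e,e}^{(s)}$, where $\lambda^*$ is the $e$-quotient of $\lambda$. Composing $\Phi$ with the two basis identifications of the previous paragraph realises it as the map $[\chi_\lambda] \mapsto [\Delta_{\lambda^*}]$ appearing in the statement. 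Because $\Phi$ is $U(\fH)$-equivariant, it intertwines the two avatars of $S_\mu$; combined with $a_\mu^* = S_\mu$ on one side and $\mathcal{L}_\mu = S_\mu$ on the other, this yields the theorem.

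The main delicate point is verifying the claimed form of $\Phi$ on basis vectors: one must check that the $U(\fH)$-equivariant isomorphism produced by \cite{SV} and \cite{U} sends $|\lambda, s\rangle$ precisely to $|\lambda^*, s\rangle$, rather than to some sign- or permutation-twisted variant, and that the Heisenberg element $b_{s_\mu}$ used by Leclerc--Thibon on the level-one side agrees with the one used by Shan--Vasserot on the level-$e$ side. Once the conventions in Uglov's construction are reconciled, the rest of the argument is formal from $\fH$-equivariance and the two prior identifications of $S_\mu$, so the combinatorial bookkeeping around $e$-cores and $e$-quotients is where the real work lies.
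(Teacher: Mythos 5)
Your proposal follows essentially the same route as the paper's proof: identify $a_{\mu}^*$ with $S_{\mu}$ on ${\cF}_{e,e}^{(s)}$ via (\cite{SV}, 5.13, 5.20), identify ${\mathcal L}_{\mu}$ with $S_{\mu}$ on ${\cF}_{e}^{(s)}$ via Theorem \ref{Lusztig} together with the fact that Lusztig induction preserves $e$-cores, and then transfer one operator to the other along the $U(\fH)$-equivariant isomorphism ${\cF}_e^{(0)} \cong {\cF}_{e,e}^{(0)}$ of (\cite{SV}, 6.17, 6.22). Your explicit flagging of the need to check that this isomorphism sends $|\lambda, s\rangle$ to $|\lambda^*, s\rangle$ on the nose is a detail the paper leaves implicit in its setup via Uglov's bijection, but the argument is the same.
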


\begin{proof}
%The BMM bijection is defined for all finite reductive groups. In the case
%of $G_n$ it can be defined between $K_o(\mathcal A)$ and  $[{\cO}(\G_n)]$
%mapping $\chi_{\lam}$ to $\Delta_{\lam^*}$ where $\lam^*$ is the $e$-quotient of $\lam$,
%up to sign. 

%We consider the space ${\cF}_{e,e}^{(s)}$  as above, 
%spanned by basis elements $\lbrace |\lambda, s> | \lambda \in \mathcal{P}^{e}\rbrace$,
%and the space $ {\cF}_{e}^{(s)}$ spanned by basis elements
%$\lbrace \l|\lambda, s> | \lambda \in \mathcal{P} \rbrace$, where $\lambda$ runs over partitions
%whose core is labeled by $s$. The map $|\lam,s> \rightarrow ({\lam}^*,s)$ where 
%${\lam}^* $ is the $e$-quotient of $\lam$ is then a bijection between the two spaces.

Consider the action of $b_{s_{\mu}} \in U(\fH)$ on ${\cF}_{e,e}^{(s)}$. The operator $S_{\mu}$ 
acting on ${\cF}_{e,e}^{(s)}$ can be identified 
with $a_{\mu}^*$  acting on $[{\cO}(\G)]$, with the basis element $|\lambda, s> $
 corresponding to  $[\Delta_{\lam}]$ (\cite{SV}, 5.20).
 
On the other hand,  $b_{s_{\mu}} \in U(\fH)$  acts as $S_{\mu}$ on the space ${\cF}_{e}^{(s)}$
and thus, by Theorem \ref{Lusztig} as ${\mathcal L}_{\mu}$ on ${\cA}^{(s)}$ with the 
basis element $|\lambda, s> $  corresponding to  $[\chi_{\lam}]$.
Here we note that Lusztig induction preserves $e$-cores, and thus ${\mathcal L}_{\mu}$ 
fixes ${\cA}^{(s)}$ .

Now ${\cF}_{e}^{(s)}$  is isomorphic to ${\cA}^{(s)}$  and
${\cF}_{e,e}^{(s)}$  is isomorphic to $[{\cO}(\G)]$. Thus 
we have shown that $a_{\mu}^*$ and ${\mathcal L}_{\mu}$  correspond
under two equivalent representations of $U(\fH)$. 

\end{proof}

\begin{cor}
 The BMM-bijection of Theorem \ref{BMM}  between the constituents of 
the Lusztig map $R^{G_n}_L(\lam)$ where $(L, \lam)$ is an $e$-cuspidal pair
and the characters of $W_{G_n}(L, \lam)$ 
is described via equivalent representations of $U(\fH)$ on Fock spaces.
\end{cor}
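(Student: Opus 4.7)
The plan is to assemble the ingredients already developed into a proof of the corollary by matching the BMM bijection of Theorem \ref{BMM} with the chain of $U(\fH)$-isomorphisms established in the preceding theorem. First I would unwind the two parameter sets involved: on one side, the irreducible characters of $W_{G_n}(L,\lam) \cong \mu_e \wr \cS_k = \G_k$ are indexed by $\cP_k^e$, which is also the index set for the standard modules $\Delta_\nu$ of $\cO(\G_k)$ and for the basis $\{|\nu,s\rangle\}$ of $\cF_{e,e}^{(s)}$; on the other side, the constituents $\chi_\mu$ of $R^{G_n}_L(\lam)$ lie in the $e$-block with $e$-core determined by $\lam$, and hence are indexed by partitions with that fixed $e$-core, a set in bijection with $\cP^e_k$ via $\mu \mapsto \mu^*$. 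In these terms, the BMM bijection $I^{G_n}_L$ is precisely the map $[\chi_\mu] \mapsto [\Delta_{\mu^*}]$ appearing in the preceding theorem.

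Next, I would reinterpret the compatibility condition $R^{G_n}_M \circ I^M_L = I^{G_n}_L \circ {\rm Ind}^{W_{G_n}(L,\lam)}_{W_M(L,\lam)}$ of Theorem \ref{BMM} as an intertwining relation between $\cL_\mu$ and $a_\mu^*$. Taking $M = L' \times GL(k,q^e)$ (or the unitary variant) for a suitable smaller $e$-split Levi $L'$, the Lusztig map $R^{G_n}_M$ coincides, on characters of the form $\chi_\nu \times \chi_\mu$, with the operator $\cL_\mu$ of Section \ref{Fock2}. On the reflection-group side, $W_M(L,\lam) \subset W_{G_n}(L,\lam)$ is a parabolic subgroup of the form $\G_{k',r} \subset \G_{k'+r}$, and ${\rm Ind}^{W_{G_n}(L,\lam)}_{W_M(L,\lam)}$ applied to a character built from $\phi_\mu$ matches the CRDAHA induction functor $^\cO{\rm Ind}_{k',r}(-\otimes L_{m\mu})$ after passing to Grothendieck groups, i.e.\ it matches $a_\mu^*$.

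The main obstacle, and the core verification, is precisely this last identification: that $a_\mu^*$ descends on the level of $W_{G_n}(L,\lam)$-characters to ordinary parabolic induction in the complex reflection group. This uses the specialization ${\rm spe}:[{\rm Rep}(\mbC\G_n)]\to[\cO(\G_n)]$ together with the fact that the Bezrukavnikov--Etingof functor $^\cO{\rm Ind}_{n,r}$ is a deformation of ordinary parabolic induction from $\G_n \otimes \cS_r$ to $\G_{n+r}$, so that tensoring with $L_{m\mu}\in\cO(\cS_r)$ and inducing corresponds at the character level to inducing $\phi_\mu$ from a parabolic of $\G_{n+r}$. Once this is in hand, the Shan--Vasserot identification $a_\mu^* = S_\mu$ on $\cF_{e,e}^{(s)}$ together with the equality $\cL_\mu = S_\mu$ on $\cF_e^{(s)}$ from Theorem \ref{Lusztig} exhibit $\cA^{(s)}$ and $[\cO(\G)]$ as two equivalent $U(\fH)$-modules, with the BMM bijection $I^{G_n}_L$ as intertwiner. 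This is exactly the statement the corollary asserts.
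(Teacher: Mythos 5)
Your proposal is correct and follows essentially the same route as the paper: the paper's own proof is the one-line remark that the corollary follows from the preceding theorem together with the map $\mathrm{spe}$, and your expansion (identifying $I^{G_n}_L$ with $[\chi_\mu]\mapsto[\Delta_{\mu^*}]$, invoking $a_\mu^*=S_\mu$ on $\cF_{e,e}^{(s)}$ and $\cL_\mu=S_\mu$ on $\cF_e^{(s)}$, and using $\mathrm{spe}$ to pass back to characters of $W_{G_n}(L,\lam)$) just makes explicit the ingredients the paper leaves implicit.
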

This follows from the theorem, using the map $\rm spe$ (see Section\ref{RDAHA}).

\section{Decomposition numbers}\label{decn}

References for this section are \cite{DJ}, \cite{LT1}, \cite{LT2}.
In this section we assume $G_n=GL(n,q)$, since we will be using the 
connection with $q$-Schur algebras. We describe connections between
weight spaces of $\widehat{{\mathfrak s \ell}_e}$ on Fock space,
blocks of $q$-Schur algebras, and blocks of $G_n$. We show that some
Brauer characters of $G_n$ can be described by Lusztig induction.

The $\ell$-decomposition numbers of the groups $G_n$ have been
studied by Dipper-James and by Geck, Gruber, Hiss and Malle. The latter 
have also studied the classical groups, using modular Harish-Chandra induction.
One of the key ideas in these papers is to compare the decomposition matrices of 
the groups with those of $q$-Schur algebras.

We have the Dipper-James theory over a field of characteristic $0$ or $\ell$.
They define (\ref{DJ}, 2.9) the $q$-Schur algebra  ${\mathcal S}_q(n)$,
endomorphism algebra of a sum of permutation representations
of the Hecke algebra $\mathcal{H}_n$ of type $A_{n-1}$ \cite{DJ}.
 The unipotent characters and the $\ell$-modular Brauer characters 
of $G_n$ are both indexed by partitions of $n$ (see \cite{FS}). Similarly  the 
Weyl modules and the simple modules of ${\mathcal S}_q(n)$ 
are both indexed by partitions of $n$ (see \cite{DJ}).

 For  ${\mathcal S}_q(n)$ over $k$ of characteristic $\ell$,  $q \in  k$,
one  can define the decomposition matrix of ${\mathcal S}_q(n)$,
where $q$ is an $e$-th root of unity, where as before $e$ is the order of
$q$ mod $\ell$. By the above this is a square matrix 
whose entries are the multiplicities of simple modules in Weyl modules.
Dipper-James (\cite{DJ}, 4.9) showed that this matrix, up to reordering the rows 
and columns, is the same as the unipotent part of 
the $\ell$-decomposition matrix of $G_n$,  the transition matrix
between the ordinary (complex) characters and the $\ell$-modular
Brauer characters. The rows and columns of the matrices are 
indexed by partitions of $n$.

We consider the Fock space ${\cF}= {{\cF}_e}^{(d)}$ for a fixed $d$, 
which as in Section \ref{Fock2} is isomorphic to $\mathcal A$,
and has the standard basis $\lbrace |\lam > \rbrace, \lam \in \cP$.
It also has two canonical bases
$G^{+}(\lambda)$ and $G^{-}(\lambda)$, $\lam \in \cP$ ({\cite{LT1},\cite{LT2}). 
There is a recursive algorithm to determine these two bases.

We fix an $s$ as in Section \ref{Fock2}).
The algebra $\widehat{{\mathfrak s \ell}_e}$ acts on ${\cF}_{e,e}^{(s)}$ 
and hence on ${\cF}_{e}^{(s)}$, which is a weight space for the
algebra. 
The connection between $\widehat{{\mathfrak s \ell}_e}$-weight spaces and 
blocks of the $q$-Schur algebras and hence blocks of $GL(n,q), n\geq 0$
is known, and we describe it below.
 We denote the Weyl module of ${\mathcal S}_q(n)$ 
parametrized by $\lam$ by $W(\lam)$.

We need to introduce a function $\rm res$ on $\cP$. If $\lam \in \cP$, 
The $e$-residue of the $(i,j)$-node of the Young diagram of $\lam$ 
 is the non-negative integer $r$
given by $r \equiv j-i ({\rm mod}\ e), \ 0 \leq r< e$, 
denoted ${\rm res}_{i,j}(\lam)$.
Then ${\rm res}(\lam)= \cup_{(i,j)}({\rm res}_{i,j}(\lam))$. 

\begin{prop} A weight space for $\widehat{{\mathfrak s \ell}_e}$
on ${\cF}_{e}^{(s)}$ can be regarded as a block of a $q$-Schur algebra with $q$
a primitive $e$-th root of unity.
\end{prop}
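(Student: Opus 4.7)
The plan is to match, term by term, the standard basis of each weight space for $\widehat{{\mathfrak s \ell}_e}$ in ${\cF}_e^{(s)}$ with the set of Weyl modules in a single block of the $q$-Schur algebra ${\mathcal S}_q(n)$, using the $e$-core as the common invariant on the two sides.

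First, I would compute the weight of each basis vector $|\lambda, s>$ under the Cartan subalgebra of $\widehat{{\mathfrak s \ell}_e}$ acting on the level-one Fock space ${\cF}_e^{(s)}$. On this Fock space the Chevalley generator $f_p$ adds an addable $p$-node and $e_p$ removes a removable $p$-node of $\lambda$, so the $p$-th coordinate of the weight of $|\lambda,s>$ is, up to a shift depending only on $s$, equal to the number of boxes of residue $p$ in $\lambda$. Hence two basis vectors $|\lambda, s>$ and $|\mu, s>$ of the same degree in ${\cF}_e^{(s)}$ lie in the same weight space if and only if $\lambda$ and $\mu$ have the same multiset ${\rm res}(\lambda) = {\rm res}(\mu)$ of $e$-residues.

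Second, I would invoke the standard combinatorial identity that, among partitions of a fixed $n$, the multiset of $e$-residues and the $e$-core determine each other: removing an $e$-rim-hook changes the residue multiset by a fixed vector, and the $e$-core is the terminal partition of this process. Since the label $s$ already fixes the $e$-core of every $\lambda$ appearing in the basis of ${\cF}_e^{(s)}$, it follows that, in each degree $n$, a $\widehat{{\mathfrak s \ell}_e}$-weight space on ${\cF}_e^{(s)}$ has basis given by those $|\lambda, s>$ with $\lambda \vdash n$ and a prescribed $e$-core.

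Third, I would appeal to the block theory of ${\mathcal S}_q(n)$ at a primitive $e$-th root of unity (the $q$-analogue of Nakayama's conjecture, due to Dipper-James): two Weyl modules $W(\lambda)$ and $W(\mu)$ with $\lambda, \mu \vdash n$ belong to the same block if and only if $\lambda$ and $\mu$ have the same $e$-core. Under the $\mbC$-linear identification $|\lambda, s> \leftrightarrow [W(\lambda)]$ implicit in Section \ref{Fock2}, the two indexing sets coincide, so each weight space of ${\cF}_e^{(s)}$ intersected with the degree-$n$ piece is precisely the complexified Grothendieck group of a unique block of ${\mathcal S}_q(n)$. The main bookkeeping hurdle is clarifying which $\widehat{{\mathfrak s \ell}_e}$-action is intended, since under the $U(\fH)$-isomorphism ${\cF}_e^{(s)} \cong {\cF}_{e,e}^{(s)}$ of Section \ref{reflection} the source and target carry two a priori distinct actions (the $\ell=e$ and $m=e$ factors in the construction of ${\cF}_{e,e}^{(d)}$); the relevant one is the level-one action on ${\cF}_e^{(s)}$ whose Cartan weights are read off the residue content as above. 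Once this is pinned down, the proposition reduces to a direct comparison of two combinatorial stratifications of $\cP$ by $e$-core.
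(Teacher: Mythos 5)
Your proposal is correct and follows essentially the same route as the paper: the paper's proof consists of citing that Weyl modules lie in the same block iff they have the same residue content (Mathas, Theorem 5.5) and that the residue content determines the $\widehat{{\mathfrak s \ell}_e}$-weight (Rouquier--Shan--Varagnolo--Vasserot), which is exactly the chain of identifications you spell out, with the $e$-core inserted as an (equivalent) intermediate invariant and the weight computation carried out explicitly rather than cited.
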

\begin{proof}
Two Weyl modules $W(\lam), W(\mu)$ are in the same block if and only if
${\rm res}(\lam)= {\rm res}(\mu) $ (see e.g.  (\cite{Ma}, Theorem 5.5, 
$(i) \Leftrightarrow (iv)$). 
The fact that \emph{res} defines a weight space follows e.g.
from (\cite{RSVV}, p.60).
\end{proof} 

Thus a weight space determines a set of partitions of a fixed $n \geq 0$.

\begin{cor} A weight space for $\widehat{{\mathfrak s \ell}_e}$
on ${\cF}_{e}^{(s)}$ can be regarded as a block of $GL(n,q)$,
where $n$ is determined from the weight space. 
\end{cor}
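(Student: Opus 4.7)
The plan is to chain the preceding Proposition together with the Dipper--James theorem recalled earlier in this section. By the Proposition a weight space for $\widehat{\mathfrak{sl}_e}$ on $\cF_e^{(s)}$ is a block of ${\mathcal S}_q(n)$ for an appropriate $n$, so the first step is to argue that $n$ is intrinsic to the weight space. The basis vectors $|\lam,s\rangle$ of $\cF_e^{(s)}$ with $\lam$ in a fixed weight space all satisfy $|\lam|=n$, where $n$ is determined by the $e$-core encoded by $(s)$ together with the $\widehat{\mathfrak{sl}_e}$-weight (which controls the total $e$-weight of $\lam$). Thus the weight space yields an unambiguous integer $n$ and a block of ${\mathcal S}_q(n)$, consisting of a set of Weyl modules $\{W(\lam)\}$ indexed by a subset of $\cP_n$.

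Next I would invoke the Dipper--James result: the decomposition matrix of ${\mathcal S}_q(n)$ coincides, up to a reordering of rows and columns, with the unipotent part of the $\ell$-decomposition matrix of $GL(n,q)$, both indexed by $\cP_n$. Since the block decomposition of a finite-dimensional algebra is read off from the indecomposable block-diagonal form of its (square) decomposition matrix, the partition of $\cP_n$ induced by blocks of ${\mathcal S}_q(n)$ agrees with the partition of $\cP_n$ induced by the unipotent $\ell$-blocks of $GL(n,q)$. Transporting the block of ${\mathcal S}_q(n)$ obtained in the previous step across this identification gives a unipotent $\ell$-block of $GL(n,q)$.

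For consistency I would close the loop by observing that both parametrizations are governed by $e$-cores: on the ${\mathcal S}_q(n)$ side through the $\mathrm{res}$ function (as in the Proposition), and on the $GL(n,q)$ side through the Theorem in Section \ref{finite}. The $e$-core arising from the weight space via $(s)$ is the same on both sides, so the correspondence is canonical and compatible with the labelings.

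The only delicate point is the bookkeeping needed to extract $n$ from a weight space; everything else is a direct quotation of the Proposition, Dipper--James, and the $e$-core block classification. I would also make explicit that, because Dipper--James only identifies the unipotent part of the $\ell$-decomposition matrix, the corollary produces a unipotent $\ell$-block rather than an arbitrary $\ell$-block of $GL(n,q)$.
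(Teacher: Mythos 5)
Your proposal is correct and follows essentially the same route the paper intends: the Proposition identifies the weight space with a block of ${\mathcal S}_q(n)$ (the paper's remark ``Thus a weight space determines a set of partitions of a fixed $n\geq 0$'' is exactly your bookkeeping step), and the Dipper--James comparison of decomposition matrices recalled earlier in the section transports this to a unipotent $\ell$-block of $GL(n,q)$, with the $e$-core classification of Section \ref{finite} confirming the match. Your explicit caveat that one obtains a \emph{unipotent} block is a worthwhile precision the paper leaves implicit.
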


We now have the following theorem which connects the $\ell$-decomposition
numbers of $G_n, n\geq 0$ with Fock space.
 
\begin{thm} Let $\phi_{\mu}$ be the Brauer character of $G_n$ indexed by
$\mu \in {\cP}_n$. Let $\lam \in {\mathcal P}_n$.Then, for large $\ell$, 
$(\chi_{\mu}, \phi_{\lam}) = (G^{-}(\lambda),|\mu>)$, where
$(G^{-}(\lambda),|\mu>)$ is the coefficient of  $|\mu>)$ in the expansion of
the canonical basis in terms of the standard basis.
\end{thm}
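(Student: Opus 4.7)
The plan is to string together three known results, each linking one side of the claimed equality to the middle notion of a $q$-Schur algebra decomposition number at a primitive $e$-th root of unity.

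First, I would translate the left-hand side into $q$-Schur language. By the theorem of Dipper--James (\cite{DJ}, 4.9) recalled earlier in Section \ref{decn}, the unipotent part of the $\ell$-decomposition matrix of $G_n = GL(n,q)$ coincides, up to a reordering of rows and columns by the partition labels, with the decomposition matrix of the $q$-Schur algebra $\mathcal{S}_q(n)$ over a field of characteristic $\ell$, where $q$ is a primitive $e$-th root of unity. Concretely, for $\lambda, \mu \in \mathcal{P}_n$,
\begin{equation*}
(\chi_{\mu}, \phi_{\lambda}) \;=\; [\,W(\mu) : L(\lambda)\,]_{\mathcal{S}_q(n)},
\end{equation*}
where $W(\mu)$ is the Weyl module and $L(\lambda)$ the simple head attached to $\lambda$.

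Next, I would invoke James's conjecture to pass from characteristic $\ell$ to characteristic $0$. For $\ell$ sufficiently large relative to $n$, the decomposition numbers of $\mathcal{S}_q(n)$ over a field of characteristic $\ell$ agree with those of the $v$-Schur algebra over $\mathbb{C}$ specialised at $v = \zeta_e$, a primitive $e$-th root of unity in $\mathbb{C}$. This is precisely the hypothesis ``for large $\ell$'' in the theorem, and it is what makes the identification with a Fock-space computation possible at all.

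The last step is to realise the characteristic-zero decomposition numbers of the $v$-Schur algebra as Fock-space coefficients. This is the content of the Leclerc--Thibon conjecture (\cite{LT1}, \cite{LT2}), proved by Varagnolo--Vasserot: if one expands the lower canonical basis element $G^{-}(\lambda) \in \mathcal{F}_e^{(d)}$ in the standard basis $\{|\mu\rangle\}$, and then specialises the parameter $q$ (the deformation parameter of the Fock space, not to be confused with the $q$ of $G_n$) to $1$, the resulting integer coincides with $[W(\mu):L(\lambda)]$ for the $v$-Schur algebra at $v = \zeta_e$ in characteristic $0$. Hence
\begin{equation*}
[\,W(\mu) : L(\lambda)\,] \;=\; (G^{-}(\lambda), |\mu\rangle),
\end{equation*}
and chaining the three identifications gives the theorem. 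The main obstacle — indeed the only non-bookkeeping ingredient — is the invocation of the Leclerc--Thibon/Varagnolo--Vasserot theorem; once it is quoted, the rest is a matter of matching labellings of the partitions indexing Weyl modules, simple modules, unipotent characters and Brauer characters, which is exactly what Dipper--James supply.
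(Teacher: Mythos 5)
Your overall strategy---translate the left side into $q$-Schur decomposition numbers via Dipper--James, pass to characteristic zero under the ``large $\ell$'' hypothesis, then quote the Leclerc--Thibon/Varagnolo--Vasserot theorem---is the same three-step route the paper follows (the paper uses Geck's asymptotic argument for the middle step rather than James's conjecture; this is not cosmetic, since James's conjecture is not a theorem and is now known to fail in general, whereas Geck's result is exactly the statement needed for fixed $n$ and $\ell$ sufficiently large). However, your final step contains a genuine error: the Varagnolo--Vasserot theorem identifies the decomposition numbers $[W(\mu):L(\lambda)]$ with the coefficients of the canonical basis $G^{+}(\lambda)$ in the standard basis, not of $G^{-}(\lambda)$. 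The basis $G^{-}(\lambda)$ encodes the \emph{inverse transpose} $E_n$ of the decomposition matrix $D_n$; this is visible in the paper's $n=6$, $e=2$ example, where the matrix of $G^{-}$-coefficients has alternating signs and is explicitly presented as the inverse of a decomposition matrix. So your chain $(\chi_{\mu},\phi_{\lambda}) = [W(\mu):L(\lambda)] = (G^{-}(\lambda),|\mu>)$ breaks at the second equality.

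The repair requires reading the pairing $(\chi_{\mu},\phi_{\lambda})$ as the paper intends: it is the coefficient of $\chi_{\mu}$ when the Brauer character $\phi_{\lambda}$ is written as a virtual combination of unipotent characters, i.e.\ the $(\lambda,\mu)$-entry of $E_n$, not the decomposition number $d_{\mu\lambda}$ itself (these two matrices are mutually inverse transposes, not equal). With that reading, the argument is: Varagnolo--Vasserot plus Geck plus Dipper--James show that $D_n$ has columns given by the $G^{+}(\lambda)$; hence its inverse transpose $E_n$ has rows given by the $G^{-}(\lambda)$; and the rows of $E_n$ are by definition the Brauer characters expressed in the unipotent basis. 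If instead you insist on your reading of $(\chi_{\mu},\phi_{\lambda})$ as $[W(\mu):L(\lambda)]$, the right-hand side of the theorem would have to be $(G^{+}(\lambda),|\mu>)$. Either way, as written your proposal proves a different (and false) identity.
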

\begin{proof}
 The decomposition matrix  of ${\mathcal S}_q(n)$ 
over a field of characteristic $0$, with $q$ a root of unity, is known
by Varagnolo-Vasserot \cite{VV}. By their work  the coefficients in the expansion of
the $G^{+}(\lambda)$ in terms of the standard basis give the
decomposition numbers for the algebras ${\mathcal S}_q(n)$ , $n \geq 0$,
with $q$ specialized at an $e$-th root of unity (see \cite{A}).

By an asymptotic argument of Geck \cite{G} we can pass from the decomposition matrices of 
$q$-Schur algebras in characteristic $0$ to those in characteristic $\ell$,
where $\ell$ is large. Then by the Dipper-James theorem we can pass to 
the decomposition matrices of the groups $G_n$ over a field of 
characteristic $\ell$ with $q$ an $e$-th root of unity in the field.

Let $D_n$ be the unipotent part of the $\ell$-decomposition matrix of $G_n$ and 
$E_n$ its inverse transpose.
Thus  $D_n$ has columns $G^{+}(\lambda)$ and $E_n$ has rows given
by $G^{-}(\lambda)$ (see \cite{LT1}, Section 4). The rows of $E_n$ also 
give the Brauer characters of
$G_n$, in terms of unipotent characters. These two descriptions of the rows of
$E_n$ then gives the result.
\end{proof}         

The following analog of Steinberg's Tensor Product Theorem is proved
for the canonical basis $G^{-}(\lambda)$ in \cite{LT1}.

\begin{thm}\label{Leclerc} Let $\lam$ be a partition such that ${\lam}'$ is
$e$-singular, so that  $\lam = \mu + e\alpha$ where $\mu'$ is $e$-regular. Then 
$G^{-}(\lambda)= S_{\alpha}G^{-}(\mu)$.
\end{thm}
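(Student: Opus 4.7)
The plan is to verify that $S_{\alpha} G^{-}(\mu)$ satisfies the defining properties of the lower canonical basis element $G^{-}(\lambda)$ and then invoke uniqueness. Recall that $G^{-}(\lambda)$ is characterized as the unique vector in $\cF_e^{(d)}$ which is invariant under the bar involution of \cite{LT1} and whose expansion on the standard basis has leading term $|\lambda\rangle$ with all other coefficients lying in $q\mbZ[q]$, indexed by partitions strictly below $\lambda$ in the triangular order used there.

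The first step is bar-invariance of $S_{\alpha} G^{-}(\mu)$. Via the boson-fermion correspondence identifying $\cF_e^{(d)}$ (in the bosonic picture) with the algebra $\Lambda$ of symmetric functions, $S_{\alpha}$ is the $q$-deformation of multiplication by the plethystic Schur function $p_e(s_{\alpha})$, as noted in the Remark following the definition of $S_{\mu}$ in Section \ref{Fock2}. Since $p_e(s_{\alpha})$ is bar-invariant, $S_{\alpha}$ commutes with the bar involution on the deformed Fock space, and therefore $S_{\alpha} G^{-}(\mu)$ is bar-invariant because $G^{-}(\mu)$ is.

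The second step is to control the expansion of $S_{\alpha} G^{-}(\mu)$ on the standard basis. Write $G^{-}(\mu) = |\mu\rangle + \sum_{\nu \prec \mu} c_{\nu,\mu}(q)|\nu\rangle$ with $c_{\nu,\mu}(q) \in q\mbZ[q]$, so that
$$S_{\alpha} G^{-}(\mu) \;=\; S_{\alpha}|\mu\rangle \;+\; \sum_{\nu \prec \mu} c_{\nu,\mu}(q)\, S_{\alpha}|\nu\rangle.$$
The matrix coefficient $\langle \tau|S_{\alpha}|\nu\rangle$ is, up to a sign and a power of $q$ recording total spin, the coefficient of $s_{\alpha}$ in the product $\prod_i s_{\tau_i/\nu_i}$ of skew Schur functions attached to the components of the $e$-quotients, as in the Remark following Theorem \ref{Lusztig}. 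Taking $\nu=\mu$ and $\tau=\lambda=\mu+e\alpha$, the relation between the $e$-quotients of $\lambda$ and $\mu$ forces this matrix coefficient to equal $1$ with trivial $q$-factor, supplying the required leading term $|\lambda\rangle$.

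The main obstacle is verifying the triangularity in full: one must show that every other contribution to the coefficient of $|\lambda\rangle$, whether from $S_{\alpha}|\mu\rangle$ at indices $\tau \neq \lambda$ or from $S_{\alpha}|\nu\rangle$ for $\nu \prec \mu$, lies strictly in $q\mbZ[q]$, and that no standard basis vector beyond $\lambda$ appears. This is precisely where the hypothesis that $\mu'$ is $e$-regular, equivalently that $\mu$ is $e$-restricted, enters decisively: it ensures that $\lambda$ occupies the unique \emph{Steinberg top} position among partitions reachable from $\mu$ by horizontal $e$-ribbon strips compatible with $\alpha$, and blocks any strictly smaller $\nu$ from reaching $\lambda$ with constant-in-$q$ coefficient. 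The fine bookkeeping of spins along the combinatorial formula $S_{\alpha} = \sum_{\rho} \kappa_{\alpha\rho} V_{\rho}$ is the technical heart of the argument. Once this triangularity is established, uniqueness of $G^{-}(\lambda)$ yields $G^{-}(\lambda) = S_{\alpha} G^{-}(\mu)$.
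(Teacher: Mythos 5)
The paper does not actually prove this statement: it is quoted from Leclerc--Thibon \cite{LT1}, where it appears as their analogue of Steinberg's tensor product theorem, so there is no internal proof to compare yours against. Your outline does reproduce the strategy of the proof in \cite{LT1} --- show that $S_{\alpha}G^{-}(\mu)$ is bar-invariant and unitriangular against the standard basis, then invoke the uniqueness characterization of the canonical basis --- but as written it is a sketch rather than a proof, because both pillars are asserted rather than established.

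First, the commutation of $S_{\alpha}$ with the bar involution does not follow from the observation that its classical limit is multiplication by the bar-invariant symmetric function $p_e(s_{\alpha})$: the bar involution on the $q$-deformed Fock space is defined through the wedge realization and the $U_q(\widehat{{\mathfrak s \ell}_e})$-structure, and the fact that the operators $V_k$ (hence the $S_{\mu}$) commute with it is one of the substantive theorems of \cite{LT1}, not a formal consequence of the $q=1$ specialization. Second, and more seriously, you explicitly defer the triangularity analysis --- which you yourself call ``the technical heart of the argument'' --- to an unproven claim that $e$-restrictedness of $\mu$ puts $\lambda$ in a unique ``Steinberg top'' position. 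That is precisely where all the work lies: one needs a leading-term analysis of $V_{\rho}|\nu\rangle$ in terms of horizontal $e$-ribbon strips and their spins, showing that $|\lambda\rangle$ occurs with coefficient exactly $1$ and that every other standard basis vector occurs with coefficient in the appropriate lattice. Without that, uniqueness cannot be invoked. A smaller point: your normalization is the wrong one --- for the lower canonical basis $G^{-}(\lambda)$ the non-leading coefficients lie in $q^{-1}\mbZ[q^{-1}]$, not $q\mbZ[q]$, which is the condition characterizing $G^{+}$. Since the theorem is present in this paper only as a quotation, the honest fix is either to cite the proof in \cite{LT1} or to actually carry out the spin bookkeeping you have only named.
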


We now show that the rows indexed by partitions $\lam$ as in the above theorem
can be described by Lusztig induction. By replacing $S_{\alpha}$ by 
${\mathcal L}_{\alpha}$ and using Theorem \ref{Lusztig} it follows that
in these cases, Lusztig-induced characters coincide with Brauer characters.          

\begin{thm}\label{Brauer} Let $\lam = \mu + e\alpha$ where $\mu'$ is $e$-regular.
Then the Brauer character represented by $G^{-}(\lambda)$ is equal to the Lusztig 
generalized character $R_L^{G_n}(G^{-}(\mu)\times \chi_{\alpha})$,
where $L= G_m \times GL(k,q^e)$, $n=m+ke, \alpha \vdash k$.
\end{thm}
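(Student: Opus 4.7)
The plan is to string together three ingredients already established in the paper: the Leclerc--Thibon Steinberg-type identity of Theorem \ref{Leclerc}, the operator identification $S_\mu = \mathcal{L}_\mu$ of Theorem \ref{Lusztig}, and the interpretation of the canonical basis element $G^{-}(\lambda)$ as the expansion of the Brauer character $\phi_\lambda$ in unipotent characters furnished by the preceding decomposition-number theorem. Once these are laid side by side, the result will follow essentially by inspection.

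First I would start from the identity $G^{-}(\lambda) = S_\alpha\, G^{-}(\mu)$ in $\mathcal{F}_e^{(d)}$, which is precisely Theorem \ref{Leclerc}. Next, I would transport this identity from Fock space to $\mathcal{A}$ under the basis correspondence $|\nu\rangle \leftrightarrow [\chi_\nu]$; by Theorem \ref{Lusztig}, $S_\alpha$ is identified with the Lusztig induction operator $\mathcal{L}_\alpha$ on $\mathcal{A}$, so $G^{-}(\lambda) = \mathcal{L}_\alpha\bigl(G^{-}(\mu)\bigr)$ as elements of $\mathcal{A}$. Finally, expanding $G^{-}(\mu) = \sum_\nu c_\nu [\chi_\nu]$ in the unipotent basis of $[\mathcal{A}_m]$, applying $\mathcal{L}_\alpha$ termwise, and using bilinearity of Lusztig induction in the two factors of the Levi $L = G_m \times GL(k,q^e)$, one obtains
\[
\mathcal{L}_\alpha\bigl(G^{-}(\mu)\bigr) = \sum_\nu c_\nu\, R_L^{G_n}(\chi_\nu \times \chi_\alpha) = R_L^{G_n}\bigl(G^{-}(\mu) \times \chi_\alpha\bigr).
\]
By the preceding theorem, the element of $\mathcal{A}$ corresponding to $G^{-}(\lambda)$ is exactly the Brauer character $\phi_\lambda$ written as a linear combination of unipotent characters, and the chain of equalities above gives the asserted Lusztig-induced expression for it.

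I do not expect a substantive obstacle: every nontrivial ingredient has already been proved. The only sanity check worth making is that both sides live in the same $\widehat{\mathfrak{sl}}_e$-weight space, equivalently in the same $e$-block of $G_n$; this is automatic because $S_\alpha$ (and hence $\mathcal{L}_\alpha$) shifts the $(s)$-labeling in exactly the way matching $\lambda = \mu + e\alpha$, so that $G^{-}(\mu) \in [\mathcal{A}_m]$ is mapped into $[\mathcal{A}_n]$ with $n = m + ke$. The hypothesis that $\ell$ be large is inherited directly from the previous theorem, where it is needed to apply Geck's asymptotic argument in order to pass from characteristic-zero to characteristic-$\ell$ decomposition matrices.
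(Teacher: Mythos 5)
Your proposal is correct and follows the same route the paper takes: the paper's own (very brief) justification is exactly to combine Theorem \ref{Leclerc} with the substitution $S_{\alpha}\mapsto{\mathcal L}_{\alpha}$ licensed by Theorem \ref{Lusztig}, together with the earlier identification of $G^{-}(\lambda)$ with the Brauer character. Your version merely fills in the routine linearity step converting ${\mathcal L}_{\alpha}(G^{-}(\mu))$ into $R_L^{G_n}(G^{-}(\mu)\times\chi_{\alpha})$, which is implicit in the paper.
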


By using the BMM bijection, Theorem \ref{BMM}, we have the following corollary.

\begin{cor} Let $\mu = \phi$, so that $\lam =  e\alpha$. 
Then the Brauer character represented by $G^{-}(\lambda)$ can be
calculated from an induced character in a complex reflection group.
\end{cor}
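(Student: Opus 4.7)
The plan is to combine Theorem \ref{Brauer} (in the base case $\mu = \phi$) with the BMM bijection of Theorem \ref{BMM}, so that the Lusztig-induced character produced by the former is rewritten as an induced character in a complex reflection group by the latter.

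First I would specialize Theorem \ref{Brauer} to $\mu = \phi$: then $m = 0$, the Levi subgroup is $L := GL(k, q^e)$ (with $n = ek$), and $G^-(\phi) = |\phi\rangle$ corresponds under the isomorphism $\cF_e^{(d)} \cong \mathcal A$ to the trivial unipotent character of the trivial group $G_0$. The theorem thus gives the equality of generalized characters
\[
G^-(\lambda) \;=\; R_L^{G_n}(\chi_\alpha),\qquad \lambda = e\alpha,\ \alpha \vdash k.
\]

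Next I would apply Theorem \ref{BMM} with intermediate Levi $M = L$ and $e$-cuspidal pair $(L_0, 1)$, where $L_0 \subset L$ is a product of $k$ copies of the torus of order $q^e - 1$. For this pair one has $W_L(L_0, 1) \cong \cS_k$, $W_{G_n}(L_0, 1) \cong \mu_e \wr \cS_k$, and the isometry $I_{L_0}^L$ sends $\phi_\alpha$ to $\chi_\alpha$. The BMM identity then gives
\[
R_L^{G_n}(\chi_\alpha) \;=\; I_{L_0}^{G_n}\bigl(\mathrm{Ind}_{\cS_k}^{\mu_e \wr \cS_k}(\phi_\alpha)\bigr),
\]
and combining the two displays presents the Brauer character $G^-(\lambda)$ as the $I_{L_0}^{G_n}$-image of an honest induced character in the complex reflection group $\mu_e \wr \cS_k$, which is precisely what the corollary claims.

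The main (and essentially only) obstacle is the bookkeeping around the base case: explicitly, that $G^-(\phi)$ in Theorem \ref{Brauer} is to be read as the trivial character of $G_0$, and that the BMM isometry $I_{L_0}^L$ for $L = GL(k, q^e)$ does identify $\phi_\alpha$ with $\chi_\alpha$ (this is clear since the relevant parameter $e' := \mathrm{ord}_\ell(q^e)$ for $L$ equals $1$, so the $e'$-quotient of $\alpha$ is $\alpha$ itself). The usual $\pm$ ambiguity for unitary groups is routine to track.
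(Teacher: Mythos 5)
Your proposal is correct and follows exactly the route the paper intends: the paper derives this corollary in one line from Theorem \ref{Brauer} together with the BMM bijection of Theorem \ref{BMM}, which is precisely the combination you spell out (specializing to $m=0$, $L=GL(k,q^e)$, and applying the isometry $I_{L_0}^{G_n}$ to $\mathrm{Ind}_{\cS_k}^{\mu_e\wr\cS_k}(\phi_\alpha)$). Your version simply makes explicit the bookkeeping the paper leaves implicit.
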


Examples, thanks to GAP \cite{Ga}:

Some tables giving the basis vectors $G^{-}(\lambda), e=2$ are given in \cite{LT2}. 
In our examples we use transpose partitions of the partitions in these tables,
and rows instead of columns.

We first give an example of a weight space for ${\widehat {sl}_e}$, which is 
also a block for $G_n$, with $n=4$, $e=4$. This is
an example of a decomposition matrix $D$ for $n=4$, $e=4$:

$\begin {pmatrix}4||&1&0&0&0\cr
             31||&1&1&0&0\cr
             211||&  0&1&1&0\cr
              1111||&  0&0&1&1\cr
                \end {pmatrix}$

The following example is to illustrate Theorem \ref{Brauer}.
                
                An example of the inverse of a decomposition matrix for
$n=6$, $e=2$: $\begin {pmatrix}1&0&0&0&0&0&0&0&0&0\cr
            -1&1&0&0&0&0&0&0&0&0\cr
            1&-1&1&0&0&0&0&0&0&0\cr
              -1&0&-1&1&0&0&0&0&0&0\cr
               -1&1&-1&0&1&0&0&0&0&0\cr
              1&-1&1&-1&-1&1&0&0&0&0\cr
              1&0&1&-1&-1&0&1&0&0&0\cr
              0&0&-1&1&1&-1&-1&1&0&0\cr
              0&0&1&-1&0&0&1&-1&1&0\cr
            0&0&0&0&0&0&-1&1&-1&1\cr
                \end {pmatrix}$

Here the rows are indexed as: $6, 51, 42,41^2,3^2,31^3,2^3,2^21^2,
21^4,1^6$

In the above matrix:

The rows indexed by $1^6, 2^21^2, 3^2, 21^4, 41^2$ have
interpretations as Brauer characters, in terms of $R_L^{G_n}$, 
with $L$ $e$-split Levi of
the form $GL(3,q^2)$ for $\lam = 1^6, 2^21^2, 3^2$, of the form
$GL(2,q)\times GL(2,q^2)$ for $\lam=21^4$, and of the form
$GL(4,q) \times GL(1,q^2)$ for $\lam=41^2$.

With $L=GL(3,q^2)$, 

Row indexed by $3^2$ is 
$R^G_L(\chi_{3})= \chi_{3^2}- \chi_{42}+ \chi_{51}- \chi_{6}$,

Row indexed by $2^21^2$ is $R^G_L(\chi_{21})$ and

Row indexed by $1^6$ is $R^G_L(\chi_{1^3})$.

\section{Acknowledgement}

The author thanks Bernard Leclerc for valuable discussions on 
this paper.

\end{document}